\documentclass[10pt, leqno]{article}
\frenchspacing
\usepackage[utf8]{inputenc}

\usepackage{amsmath,amssymb,amsthm, epsfig}

\usepackage{mathptmx}

\usepackage[usenames,dvipsnames]{pstricks}

\usepackage{pst-grad} 

\usepackage{pst-plot} 

\usepackage{stackrel}

\usepackage[nottoc,notlot,notlof]{tocbibind}

\usepackage{hyperref}

\usepackage{esint}

\usepackage{amsmath}

\usepackage{amssymb}

\usepackage{hyperref}

\usepackage{color}

\usepackage{ulem}

\usepackage{epsfig}

\usepackage[mathscr]{eucal}

\usepackage[usenames,dvipsnames]{pstricks}
 \usepackage{epsfig}
 \usepackage{pst-grad} 
 \usepackage{pst-plot} 




\pagestyle{myheadings} \markboth{\textsc{Ricarte-Silva}}{\textsc{Global regularity for cavity type problems}}

\title{Regularity up to the boundary for singularly perturbed fully nonlinear elliptic equations}
\author{\it by \smallskip \\
 G. C. Ricarte\footnote{\noindent \textsc{Gleydson Chaves Ricarte}.
Universidade Federal Cear\'{a} - UFC. Department of Mathematics. Fortaleza - CE, Brazil - 60455-760.
\texttt{E-mail address: ricarte@mat.ufc.br}}  \quad $\&$  \quad J. V. da Silva\footnote{\noindent \textsc{Jo\~{a}o V\'{i}tor da Silva}. Universidad de Buenos Aires. FCEyN, Department of Mathematics. Buenos Aires, Argentina. \noindent \texttt{E-mail address:  jdasilva@dm.uba.ar}}
}

\date{}


\newlength{\hchng}
\newlength{\vchng}
\setlength{\hchng}{0.55in} \setlength{\vchng}{0.55in}
\addtolength{\oddsidemargin}{-\hchng}
\addtolength{\textwidth}{2\hchng} \addtolength{\topmargin}{-\vchng}
\addtolength{\textheight}{2\vchng}


\def \R {\mathbb{R}}

\def \suchthat {\ \big | \ }

\newcommand{\pe}{E_{\varepsilon}}
\newcommand{\defeq}{\mathrel{\mathop:}=}


\newtheorem{theorem}{Theorem}[section]

\newtheorem{lemma}[theorem]{Lemma}

\theoremstyle{definition}

\newtheorem{definition}[theorem]{Definition}

\newtheorem{example}[theorem]{Example}

\theoremstyle{remark}

\newtheorem{remark}[theorem]{Remark}

\numberwithin{equation}{section}


\newcommand{\intav}[1]{\mathchoice {\mathop{\vrule width 6pt height 3 pt depth  -2.5pt
\kern -8pt \intop}\nolimits_{\kern -6pt#1}} {\mathop{\vrule width
5pt height 3  pt depth -2.6pt \kern -6pt \intop}\nolimits_{#1}}
{\mathop{\vrule width 5pt height 3 pt depth -2.6pt \kern -6pt
\intop}\nolimits_{#1}} {\mathop{\vrule width 5pt height 3 pt depth
-2.6pt \kern -6pt \intop}\nolimits_{#1}}}


\begin{document}
\maketitle

\begin{abstract}
In this article we are interested in studying regularity up to the boundary for one-phase singularly perturbed fully nonlinear elliptic problems, associated to high energy activation potentials, namely
$$
    F(X, \nabla u^{\varepsilon}, D^2 u^{\varepsilon}) = \zeta_{\epsilon}(u^{\epsilon})
    \quad \mbox{in} \quad \Omega \subset \R^n
$$
where $\zeta_{\varepsilon}$ behaves asymptotically as the Dirac measure $\delta_{0}$ as $\varepsilon$ goes to zero. We shall establish global gradient bounds independent of the parameter $\varepsilon$.
\medskip

\noindent \textbf{Keywords:} Fully nonlinear elliptic operators, one-phase problems, regularity up to the boundary, singularly perturbed equations, global gradient bounds.
\medskip

\noindent \textbf{AMS Subject Classifications 2010: 35B25, 35B65, 35D40, 35J15, 35J60, 35J75, 35R35.}


\end{abstract}
\tableofcontents

\newpage

\section{Introduction}\label{int}
\hspace{0.6cm}Throughout the last three decades or so, variational problems involving singular PDEs has received a warm attention as they often come from the theory of critical points of non-differentiable functionals.  The pioneering work of Alt-Caffarelli \cite{AC} marks the beginning of such a theory by carrying out the variational analysis of the minimization problem
\begin{equation}\tag{{\bf Minimum}}
   \displaystyle \min_{} \int_{\Omega} \left( |\nabla v|^2 + \chi_{\{v>0\}} \right) \ dX,
\end{equation}
among competing functions with the same non-negative Dirichlet boundary condition.

Since the very beginning it has been well established that such discontinuous minimization problems could be treated by penalization methods. Indeed, Lewy-Stampacchia, Kinderlehrer-Nirenberg, Caffarelli among others were the precursors of such an approach to the study of problem $\Delta u^{\epsilon} = \zeta_{\varepsilon}(u^{\varepsilon})$ over of 70s and 80s. Linear problems in non-divergence form was firstly considered by Berestycki \textit{et al} in \cite{BCN}.  Teixeira in \cite{Tei} started the journey of investigation into fully nonlinear elliptic equations via singular perturbation methods:
\begin{equation}\label{eqRT}
    F(X, D^2 u^{\varepsilon}) = \zeta_{\varepsilon}(u^{\varepsilon}) \quad \mbox{in} \quad \Omega,
\end{equation}
where $\zeta_{\varepsilon} \sim \varepsilon^{-1}\chi_{(0,\varepsilon)}$. The problem appears in nonlinear formulations of high energy activation models, see \cite{RT} and \cite{Tei}. It can also be employed in the analysis of overdetermined problems as follows. Given $\Omega \subset \mathbb{R}^n$ a domain and a non-negative function
$\varphi\colon  \Omega \rightarrow \mathbb{R}$, it plays a crucial role in Geometry and Mathematical Physics the question of finding a compact hyper-surface
$ \partial \Omega' \subset \Omega$ such that the following elliptic boundary value problem
\begin{equation}  \label{Free}
\left\{
\begin{array}{rclcl}
F(X, \nabla u, D^2 u)  &=&  0 & \mbox{in} & \Omega \backslash \Omega^{\prime}\\
u & = & \varphi & \mbox{on} & \partial \Omega\\
u & = & 0  & \mbox{in}  & \Omega^{\prime}\\
u_{\nu} & = & \psi &\mbox{in} & \partial \Omega^{\prime},
\end{array}
\right.
\end{equation}
can be solved. Problems as \eqref{eqRT} became known over the years in the Literature as \textit{cavity type problems}.

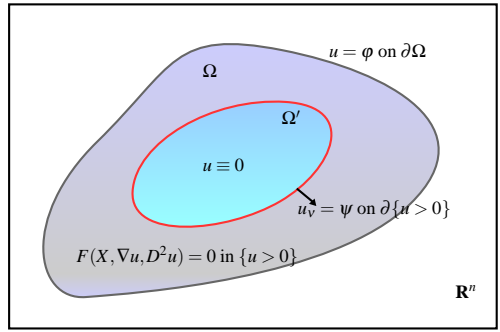
\begin{figure}[h]
\begin{center}
%
%
\psscalebox{0.7 0.7} 
{
\begin{pspicture}(0,-4.1)(9.3,4.1)
\definecolor{colour4}{rgb}{0.8,0.8,0.8}
\definecolor{colour3}{rgb}{0.8,0.8,1.0}
\definecolor{colour2}{rgb}{0.4,0.4,0.4}
\definecolor{colour6}{rgb}{0.6,1.0,1.0}
\definecolor{colour5}{rgb}{0.6,0.8,1.0}
\definecolor{colour1}{rgb}{1.0,0.2,0.2}
\psframe[linecolor=black, linewidth=0.04, dimen=outer](9.3,2.1)(0.0,-4.1)
\pscustom[linecolor=black, linewidth=0.04]
{
\newpath
\moveto(8.1,-1.1)
}

\pscustom[linecolor=black, linewidth=0.04]
{
\newpath
\moveto(9.3,-1.5)
}
\pscustom[linecolor=black, linewidth=0.04]
{
\newpath
\moveto(14.9,3.3)
}

\pscustom[linecolor=black, linewidth=0.04]
{
\newpath
\moveto(14.9,2.9)
}
\psbezier[linecolor=colour2, linewidth=0.04, fillstyle=gradient, gradlines=2000, gradbegin=colour3, gradend=colour4](1.6908844,-0.53186655)(3.1618953,0.90770125)(3.113053,1.622456)(5.204675,1.2099068)(7.2962976,0.7973577)(9.114757,-0.26643416)(7.6437454,-1.706002)(6.1727347,-3.1455698)(2.4471457,-3.409379)(1.4782214,-3.4827323)(0.5092973,-3.5560858)(0.21987355,-1.9714344)(1.6908844,-0.53186655)
\psbezier[linecolor=colour1, linewidth=0.04, fillstyle=gradient, gradlines=2000, gradbegin=colour5, gradend=colour6](2.3646572,-1.4787042)(2.3137617,-2.416411)(4.1486554,-2.2604098)(5.1705256,-1.6616218)(6.1923957,-1.0628339)(6.6671977,0.13458377)(5.2729506,0.22547626)(3.8787034,0.31636873)(2.4155526,-0.5409977)(2.3646572,-1.4787042)
\rput[bl](3.7,-1.1){$u \equiv 0$}
\rput[bl](5.2,-0.2){$\Omega^{\prime}$}
\rput[bl](1.3,-2.9){$F(X, \nabla u, D^2 u) = 0 \,\,\mbox{in} \,\, \{u>0\}$}
\rput[bl](6.1,1.0){$u = \varphi \,\, \mbox{on} \,\, \partial \Omega$}
\rput[bl](8.5,-3.5){$\mathbf{R}^n$}
\psline[linecolor=black, linewidth=0.04](5.483535,-1.4205831)(5.809861,-1.6918935)
\rput{-17.110401}(0.7442126,1.635026){\psdots[linecolor=black, dotstyle=triangle*, dotsize=0.12820485](5.8063936,-1.692062)}
\rput[bl](5.5,-2.0){$u_{\nu} = \psi \,\, \mbox{on} \,\, \partial\{u>0\}$}
\rput[bl](3.7,0.7){$\Omega$}
\end{pspicture}
}

\end{center}
\caption{Configuration for Free Boundary Problem}
\end{figure}

Hereafter in this paper, $F \colon \Omega \times \mathbb{R}^n \times \textrm{Sym}(n) \rightarrow \mathbb{R}$  is a fully nonlinear uniformly elliptic operator, i.e, there exist constants $\Lambda \geq \lambda > 0$ such that
\begin{equation}\label{UE} \tag{{\bf Unif. Ellip.}}
   \lambda \|N\| \le F(X,\overrightarrow{ p}, M+N) - F(X,\overrightarrow{ p}, M) \le \Lambda \|N\|,
\end{equation}
for all  $M,N \in \textrm{Sym}(n), N \ge 0, \overrightarrow{ p }\in \mathbb{R}^n  \,\ and  \,\ X \in \Omega$. As usual $\textrm{Sym}(n)$ denotes the set of all $n \times n$ symmetric matrices. Moreover, we must to observe the mapping $M \mapsto F(X, \overrightarrow{p} ,M)$ is monotone increasing in the natural order on $Sym(n)$ and Lipschitz continuous. Under such a structural condition, the theory of viscosity solutions provides a suitable notion for weak solutions.

\begin{definition}[{\bf Viscosity solution}] For an operator $F\colon \Omega \times \mathbb{R}^n \times \text{Sym}(n) \to \mathbb{R}$, we say a
function $u \in C^0(\Omega)$ is a viscosity supersolution (resp. subsolution) to
$$
     F(X, \nabla u, D^2 u) = f(X) \quad \mbox{in} \quad \Omega,
$$
if whenever we touch the graph of $u$ by below (resp. by above) at a point $Y \in \Omega$ by a smooth function $\phi$, there holds
$$
     F(Y, \nabla \phi(Y), D^2 \phi (Y)) \le  f(Y) \quad (\mbox{resp.} \ge f(Y)).
$$
Finally, we say $u$ is a viscosity solution if it is simultaneously a viscosity supersolution and subsolution.
\end{definition}

\begin{remark} All functions considered in the paper will be assumed continuous in $\overline{\Omega}$, namely $C$-viscosity solutions, see Caffarellli-Cabr\'{e} \cite{CC} and Teixeira \cite{Tei}. However, we also can to consider $L^p$-viscosity notion for such a solutions, see for example Winter \cite{Wint}.
\end{remark}

In \cite{RT}, several analytical and geometrical properties of such a fully nonlinear singular problem \eqref{eqRT} were established. Notwithstanding, regularity up to the boundary for approximating solutions has not been proven in the literature yet. This is the key goal of the present article. More precisely, we shall prove a uniform gradient estimate up to the boundary  for viscosity solutions of the singular perturbation problem
\begin{equation}\label{Equation Pe} \tag{\bf $\pe$}
\left\{
\begin{array}{rclcl}
F(X, \nabla u^{\varepsilon}, D^2 u^{\epsilon}) &=&  \zeta_{\epsilon}(u^{\epsilon}) & \mbox{in} & \Omega\\
u^{\epsilon} &=&\varphi & \mbox{on} & \partial \Omega,
\end{array}
\right.
\end{equation}
where we have: the singular reaction term $\zeta_{\varepsilon}(s) = \frac{1}{\varepsilon} \zeta \left(\frac{s}{\varepsilon}\right)$ for some non-negative  $\zeta \in C^{\infty}_{0}([0,1])$, a parameter $\varepsilon >0$, a non-negative $\varphi \in C^{1, \gamma}(\overline{\Omega})$, with $0<\gamma < 1$, and, a bounded $C^{1,1}$ domain $\Omega$ (or $\partial \Omega$ for short). Throughout this paper we will assume the following bounds: $\|\varphi\|_{C^{1,\gamma}(\overline{\Omega})} \le \mathcal{A}$ and $\|\zeta\|_{L^{\infty}([0, 1])} \le \mathcal{B}$.

\begin{theorem}[{\bf Global uniform Lipschitz estimate}] \label{principal2}
Let  $u^{\epsilon}$ be a viscosity solution to the singular perturbation problem \eqref{Equation Pe}. Then under the assumptions ${\bf (F1)-(F3)}$ there exists a constant $C(n, \lambda, \Lambda, b, \mathcal{A}, \mathcal{B},\Omega)>0$ independent of $\epsilon$, such that
$$
	\|\nabla u^{\epsilon}\|_{L^{\infty}(\overline{\Omega})} \le C.
$$
\end{theorem}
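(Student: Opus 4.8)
The plan is to decouple the estimate into an interior part, where the uniform Lipschitz bound of \cite{RT} applies directly, and a boundary part, which is the genuinely new ingredient and is obtained by comparison with barriers adapted to the singular reaction. I would first record the a priori bounds. Since $\zeta\ge 0$ is supported in $[0,1]$, the solution $u^{\epsilon}$ is a viscosity subsolution of $F(X,\nabla u,D^{2}u)=0$ and a viscosity supersolution of $F(X,\nabla u,D^{2}u)=\mathcal{B}/\epsilon$ in $\Omega$; moreover the reaction vanishes on $\{u^{\epsilon}<0\}$, so the maximum principle forbids a negative interior minimum and $u^{\epsilon}\ge 0$. Comparing $u^{\epsilon}$ from above with the $F$-solution carrying the datum $\varphi$ gives $0\le u^{\epsilon}\le\|\varphi\|_{L^{\infty}(\partial\Omega)}\le\mathcal{A}$, independently of $\epsilon$. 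Under \textbf{(F1)}--\textbf{(F3)} the interior estimate of \cite{RT}, after rescaling through $v(Z)=u^{\epsilon}(X+rZ)/r$ (which solves an equation of the same type with parameter $\epsilon/r$ and no worse structural constants), reads
$$
\|\nabla u^{\epsilon}\|_{L^{\infty}(B_{r/2}(X))}\;\le\; C\Bigl(1+r^{-1}\|u^{\epsilon}\|_{L^{\infty}(B_{r}(X))}\Bigr)
$$
for every ball $B_{r}(X)\subset\Omega$, with $C=C(n,\lambda,\Lambda,b,\mathcal{B})$. In particular the theorem already holds on $\{\dist(\cdot,\partial\Omega)\ge\rho_{0}\}$, and it remains to bound $|\nabla u^{\epsilon}|$ in a fixed one-sided neighbourhood of $\partial\Omega$.

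The boundary analysis hinges on the two-sided linear bound
$$
|u^{\epsilon}(X)-\varphi(X_{0})|\;\le\; C_{1}\,|X-X_{0}|
$$
for every $X_{0}\in\partial\Omega$ and $X\in\overline{\Omega}$, with $C_{1}$ uniform in $\epsilon$. For the bound from above I would compare $u^{\epsilon}$, which is a subsolution of $F=0$, with the $F$-solution $\overline{h}$ in $\Omega\cap B_{\rho}(X_{0})$ equal to $\varphi$ on $\partial\Omega\cap B_{\rho}(X_{0})$ and to $\mathcal{A}$ on the spherical part; then $u^{\epsilon}\le\overline{h}$, and since $X_{0}$ lies away from the corners of this domain the boundary $C^{1,\alpha}$ estimate for $\overline{h}$ — valid under \textbf{(F1)}--\textbf{(F3)} because $\partial\Omega\in C^{1,1}$ and $\varphi\in C^{1,\gamma}$ — gives $u^{\epsilon}(X)\le\varphi(X_{0})+C|X-X_{0}|$ near $X_{0}$. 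For the bound from below I would use the explicit barrier
$$
\Psi(X)\defeq\varphi(X_{0})+\nabla\varphi(X_{0})\cdot(X-X_{0})-L\,|X-X_{0}|^{1+\gamma}-\frac{K}{2}|X-X_{0}|^{2},
$$
with $L\defeq 2\mathcal{A}$ and $\rho,K$ fixed, depending only on $n,\lambda,b,\mathcal{A},\gamma$ and the $C^{1,1}$ character of $\Omega$: the $|X-X_{0}|^{1+\gamma}$ term absorbs the $C^{1,\gamma}$ modulus of $\varphi$ and forces $\Psi\le\varphi$ on $\partial\Omega\cap B_{\rho}(X_{0})$, while the $K$-quadratic term forces $\Psi\le 0$ on $\Omega\cap\partial B_{\rho}(X_{0})$ and makes $D^{2}\Psi$ so negative that $F(X,\nabla\Psi,D^{2}\Psi)\le 0\le\mathcal{B}/\epsilon$ throughout $\Omega\cap B_{\rho}(X_{0})$. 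Thus $\Psi$ is a subsolution of $F=\mathcal{B}/\epsilon$ lying below $u^{\epsilon}$ on $\partial(\Omega\cap B_{\rho}(X_{0}))$, whence $u^{\epsilon}\ge\Psi\ge\varphi(X_{0})-C_{1}|X-X_{0}|$ there.

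Granted this two-sided control, $\varphi\in C^{1,\gamma}$ implies that the oscillation of $u^{\epsilon}$ over $B_{2d}(X_{0})\cap\Omega$ is at most $Cd$ for every $X_{0}\in\partial\Omega$. I would then fix an interior point $X$ with $d\defeq\dist(X,\partial\Omega)$ small, take $X_{0}$ its projection on $\partial\Omega$, and argue by cases. If $u^{\epsilon}\ge\epsilon$ on $B_{d}(X)$ then $u^{\epsilon}$ is $F$-harmonic there and the interior $C^{1,\alpha}$ estimate (applied to $u^{\epsilon}$ minus a constant) gives $|\nabla u^{\epsilon}(X)|\le Cd^{-1}(\text{osc of }u^{\epsilon}\text{ over }B_{d}(X))\le C$. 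Otherwise $u^{\epsilon}<\epsilon$ somewhere in $B_{d}(X)$, hence $\sup_{B_{d}(X)}u^{\epsilon}<\epsilon+Cd$; if $d\ge\epsilon$ the scaled interior estimate above gives $|\nabla u^{\epsilon}(X)|\le C(1+d^{-1}(\epsilon+Cd))\le C$, whereas if $d<\epsilon$ (so also $\epsilon$ smaller than the chart size, the complementary range of $\epsilon$ being classical since the reaction is then uniformly bounded) I would flatten $\partial\Omega$ near $X_{0}$ and rescale at the natural scale $\epsilon$, $w(Z)=u^{\epsilon}(X_{0}+\epsilon Z)/\epsilon$: this solves a boundary value problem with right-hand side $\zeta(w)$, now bounded by $\mathcal{B}$, with boundary datum $\varphi(X_{0}+\epsilon\,\cdot)/\epsilon$ whose $C^{1,\gamma}$ norm on the flattened boundary is $\le\mathcal{A}$, and with $\|w\|_{L^{\infty}}$ controlled by the linear bound just proved; the boundary $C^{1,\alpha}$ estimate for fully nonlinear uniformly elliptic equations with bounded right-hand side then yields $|\nabla u^{\epsilon}(X)|=|\nabla w|\le C$. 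A finite covering of $\partial\Omega$, together with the interior bound, completes the argument.

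The principal obstacle throughout is the singular reaction: $\|\zeta_{\epsilon}(u^{\epsilon})\|_{L^{\infty}}$ is of order $\epsilon^{-1}$, so no direct comparison or Schauder--Caffarelli type estimate survives the passage $\epsilon\to 0$. The way out is the confinement of this term to the thin layer $\{0<u^{\epsilon}<\epsilon\}$ together with a blow-up at the natural scale $\epsilon$, which renders the equation uniformly elliptic with bounded right-hand side; the delicate point, absent from the interior theory of \cite{RT}, is that near $\partial\Omega$ there is no room to perform this blow-up inside a full ball, so it must be carried out against the fixed boundary. This is precisely why the $C^{1,1}$ regularity of $\Omega$, the $C^{1,\gamma}$ regularity of $\varphi$ and a uniform a priori linear growth from $\partial\Omega$ are all genuinely needed, and why the bookkeeping required to keep every constant independent of both $\epsilon$ and of the particular solution is the most technical part of the proof.
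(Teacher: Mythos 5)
Your architecture (interior estimate from \cite{RT}, a two-sided linear control at $\partial\Omega$, and an $\epsilon$-scale blow-up against the flattened boundary) captures the right heuristics, and the upper half of your linear bound is fine. The lower half, however, rests on a comparison that runs in the wrong direction. You compute $F(X,\nabla\Psi,D^2\Psi)\le 0$ for your concave barrier $\Psi$ and then declare $\Psi$ a subsolution of $F=\mathcal{B}/\epsilon$. By this paper's definition (and classically), a smooth $\Psi$ subsolves $F=g$ precisely when $F(X,\nabla\Psi,D^2\Psi)\ge g$; a function with $F(\Psi)\le 0$ is a \emph{supersolution} of $F=\mathcal{B}/\epsilon$, and comparing two supersolutions yields nothing. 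To conclude $u^{\epsilon}\ge\Psi$ from $\Psi\le u^{\epsilon}$ on $\partial(\Omega\cap B_{\rho}(X_{0}))$ one would need $F(\Psi)\ge\mathcal{B}/\epsilon$, which blows up as $\epsilon\to 0$; no $\epsilon$-independent barrier with $D^{2}\Psi\le 0$ can supply this, since by (F1)--(F2) one has $F(\Psi)\le\mathcal{P}^{+}_{\lambda,\Lambda}(D^2\Psi)+b|\nabla\Psi|\le b|\nabla\Psi|$, a uniformly bounded quantity. The lower linear growth from $\partial\Omega$ is therefore not a soft barrier fact; it is essentially the content of the estimate $\varphi(X)\le\epsilon+c_{0}\,\delta_{\epsilon}(X)$ in Lemma \ref{l5}, whose proof requires quite different machinery. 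Since your oscillation bound, the inequality $\varphi(X_{0})\lesssim\epsilon$ that keeps the rescaled datum $\varphi(X_{0}+\epsilon\,\cdot)/\epsilon$ bounded, and the whole $d<\epsilon$ case all hinge on this lower bound, the gap is fatal to the argument as written.

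The paper takes a different route: it first proves the uniform Lipschitz bound on the transition layer $\{0\le u^{\epsilon}\le\epsilon\}$ up to the boundary (Theorem \ref{prop1}), via a contradiction argument that combines a Harnack/reflection propagation lemma (Lemma \ref{lemma2.0}), a cone-projection iteration along $\partial\Omega$, and interior regularity. Only then does the control $\varphi(X)\le\epsilon+c_{0}\delta_{\epsilon}(X)$ of Lemma \ref{l5} become available, using in addition the quantitative Hopf principle of Lemma \ref{lemma2.1} applied at a touching point $Z\in\partial\{0\le u^{\epsilon}\le\epsilon\}$. The ensuing case analysis is organized by the competition between $\delta(X)=\dist(X,\partial\Omega)$ and $\delta_{\epsilon}(X)=\dist(X,\{0\le u^{\epsilon}\le\epsilon\})$, not by $d$ versus $\epsilon$. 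Producing an analogue of Theorem \ref{prop1} and Lemma \ref{l5} is where the real work lies, and it is the part your proposal leaves out.
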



\medskip

Our new estimate  allows us to obtain existence for corresponding free boundary problem \eqref{Free}, keeping the prescribed boundary value data, see Theorem \ref{limFB}. Finally, we should emphasize our estimate generalizes the local gradient bound proven in \cite{Tei}, see also \cite{RT} for a rather complete local analysis of such a free boundary problem.

Although we have chosen to carry out the global analysis for the homogeneous case, the results presented in this paper can be adapted, under some natural adjustments, for the non-homogeneous case,
$$
\left\{
\begin{array}{rclcl}
 F(X, \nabla u^{\varepsilon}, D^2 u^{\epsilon})  &=& \zeta_{\epsilon}(u^{\epsilon}) + f_\varepsilon (X) & \mbox{in} & \Omega\\
u^{\epsilon}(X) &=&\varphi(X) & \mbox{on} & \partial \Omega,
\end{array}
\right.
$$
with $0\leq c_0 \le f_\varepsilon \le c_1$.

Our approach follows the pioneering work of Gurevich \cite{Gu}, where it is  introduced a new strategy to investigate uniform estimate up to boundary of two-phase singular perturbation problems involving linear elliptic operators of type $\mathcal{L}u = \partial_i(a_{ij}\partial_j u).$ This method has been successfully applied by Karakhanyan in \cite{K}  for the one-phase problem in the case involving nonlinear singular/degenerate elliptic operators of the $p$-Laplacian type $\Delta_{p}u^{\epsilon} = \zeta_{\epsilon}(u^{\epsilon}).$

\subsection{Organization of the article}

\hspace{0.6cm}The paper is organized of following way: In Section \ref{NoSta} we shall introduce the notation which will be used throughout of the paper, as well as we set up the structural assumptions for fully nonlinear elliptic operators. In Section \ref{ExisUniq} we discuss about the existence of appropriated notion of weak solutions to problem \eqref{Equation Pe}, namely \textit{Perron's type solutions}, see Theorem \ref{ExistMinSol}. The Section \ref{reglocal} is devoted to prove the main Theorem \ref{principal2}, for this reason it contains several keys Lemmas which are standard in the global regularity theory for elliptic operators in accordance with Gurevich \cite{Gu} and Karakhanyan \cite{K}, as well as Teixeira \cite{Tei} and Ricarte-Teixeira \cite{RT} for the corresponding local fully nonlinear singular perturbation theory. The free boundary problem, namely Theorem \ref{limFB} is obtained as consequence of global Lipschitz regularity. Finally, the last Section \ref{Append} is an Appendix where we prove two technical Lemmas (respectively Lemmas \ref{lemma2.0} and \ref{lemma2.1}) that play an important role in order to prove the main Theorem \ref{principal2} in Section \ref{reglocal}.

\section{Notation and statements}\label{NoSta}

\hspace{0.6cm}We shall introduce some notations and structural assumptions which we will use throughout this paper.
\begin{itemize}
\item[\checkmark] $n$ indicates the dimension of the Euclidean space.
\item[\checkmark] $\mathcal{H}_{+}$ is the half-space $\{X_n >0\}$.
\item[\checkmark] $\mathcal{H} \defeq \{X=(X_1, \ldots, X_n) \in \mathbb{R}^n : X_n=0\}$ indicates the hyperplane.
\item[\checkmark] $\hat{X}$ is the vertical projection of $X$ on $\mathcal{H}$.
\item[\checkmark] $\mathcal{C}_X \defeq \left\{Y \in H_{+} : |Y-\hat{Y}| \ge \frac{1}{2}|Y-X|\right\}$ is the cone with vertex at point $X \in \mathcal{H}$.
\item[\checkmark] $B_r(X)$ is the ball with center at $X$ and radius $r$, and, $B_r$ the ball $B_r(0)$.
\item[\checkmark] $B^{+}_{r}(X) \defeq B_{r}(X) \cap \mathcal{H}_{+}$.
\item[\checkmark] $B^{\prime}_{r}(X) $ is the ball with center at $X$ and radius $r$ in $\mathcal{H}$.
\end{itemize}

\begin{remark} Throughout this article \textit{Universal constants} are the ones depending only on the dimension, ellipticity and structural properties of $F$, i. e., $n, \lambda, \Lambda$ and $b$.
\end{remark}

Also, following classical notation, for constants $\Lambda \geq \lambda >0$ we denote by
$$
    \mathcal{P}^{+}_{\lambda,\Lambda}(M) \defeq \lambda  \cdot \sum_{e_i <0} e_i  + \Lambda \cdot \sum_{e_i >0} e_i \quad \mbox{and} \quad \mathcal{P}^{-}_{\lambda,\Lambda}(M) \defeq \lambda \cdot \sum_{e_i >0} e_i  + \Lambda \cdot \sum_{e_i <0} e_i
$$
the \textit{Pucci's extremal operators}, where $e_i = e_i(M)$ are the eigenvalues of $M \in Sym(n)$.

We shall introduce structural conditions that will be frequently used throughout of this paper:
\begin{enumerate}
\item[{\bf (F1)}](\textit{Ellipticity and Lipschitz regularity condition }) For all $M,N \in \textrm{Sym}(n)$, $\overrightarrow{p},\overrightarrow{q} \in \mathbb{R}^n$, $X \in \Omega$\label{F1}
$$
    \mathcal{P}^{-}_{\lambda, \Lambda}(M-N)-b|\overrightarrow{p}-\overrightarrow{q}| \le	F(X,\overrightarrow{p},M) - F(X,\overrightarrow{q},N) \le \mathcal{P}^{+}_{\lambda, \Lambda}(M-N) + b|\overrightarrow{p}-\overrightarrow{q}|
$$
\item[{\bf (F2)}] (\textit{Normalization condition}) We shall suppose that,
$$
    F(X, 0, 0) = 0
$$
\item[{\bf(F3)}] (\textit{Small oscillation condition}) We must to assume
$$
     \displaystyle \sup_{X_0 \in \Omega} \Theta_F(X, X_0)\ll 1
$$
where
$$
    \displaystyle \Theta_F(X, X_0) \defeq \sup_{M \in Sym(n)\backslash \{0\}} \frac{|F(X, 0, M)- F(X_0, 0, M)|}{\|M\|}
$$

\end{enumerate}

\begin{remark} Assumption ${\bf (F1)}$ is equivalent to notion of uniform ellipticity \ref{UE} when $\overrightarrow{p} = \overrightarrow{q}$. The assumption ${\bf (F2)}$ is not restrictive, since we can always redefine the operator in order to check it. The smallest regime on oscillation of $F$, namely condition ${\bf (F3)}$, depends only on universal parameters, see \cite{Wint}.
\end{remark}

\begin{example}[{\bf Isaacs type operators}]An example which we must have in mind are the Isaacs' operators from stochastic game theory
\begin{equation}\label{IO}
    \displaystyle	F(X, \overrightarrow{p}, M) \defeq \sup_{\alpha \in \mathfrak{A}} \inf_{\beta \in \mathfrak{B}}\left(\textrm{Tr}\left[A^{\alpha, \beta}(X)\cdot M\right] + \left\langle B^{\alpha, \beta}(X), \overrightarrow{p}\right\rangle \right) \quad \left(\mbox{resp.} \,\,\, \inf\limits_{\mathfrak{A}} \,\,\sup_{\mathfrak{B}} (\cdots)\right),
\end{equation}
where $A^{\alpha, \beta}$ is a family of measurable $n \times n$ real symmetric matrices with small oscillation satisfying
$$
	 \lambda\|\xi\|^2 \leq  \xi^TA^{\alpha, \beta}(X)\xi  \leq \Lambda \|\xi\|^2, \,\, \forall \,\, \xi \in \R^n \quad \mbox{and} \quad \|B^{\alpha, \beta}\|_{L^{\infty}(\Omega)}\leq b.
$$
\end{example}

\section{Existence of solutions}\label{ExisUniq}

\hspace{0.6cm}In this Section we shall comment on the existence of appropriated viscosity solutions to the singularly perturbed problem \eqref{Equation Pe}. Such a solutions are labeled by \textit{Perron's type solutions}.

\begin{theorem}[{\bf Perron's type method}, \cite{RT}]\label{PerMeth} Let $f \in C^{0, 1}([0, \infty)) $ be a bounded function. Suppose that there exist a viscosity sub-solution $\underline{u} \in C(\overline{\Omega}) \cap C^{0, 1}(\Omega)$ (respectively super-solution $\overline{u} \in C(\overline{\Omega}) \cap C^{0, 1}(\Omega)$) to $F(X, \nabla w, D^2w) = f(w)$ satisfying
$\underline{u} = \overline{u} = g \in C(\partial \Omega)$. Define the set of functions
$$
\mathcal{S} \defeq \left\{ v \in C(\overline{\Omega}) \;\middle|\; \begin{array}{c}
v \text{ is a viscosity super-solution to } \\
F(X, \nabla w, D^2w) = f(w) \text{ such that } \underline{u} \le v \le \overline{u}
\end{array}
\right\}.
$$
Then,
\begin{equation}\label{2.1}
	u(X) \defeq \inf_{v \in \mathcal{S}} v(X), \,\, \mbox{for} \,\, x \in \Omega
\end{equation}
is a continuous viscosity solution to $F(X, \nabla w, D^2w) = f(w)$ in $\Omega$ with $u=g$ continuously on $\partial \Omega$.
\end{theorem}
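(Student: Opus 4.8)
The plan is to run Perron's method: the class $\mathcal{S}$ provides a nonempty family of super-solutions trapped between the barriers $\underline{u}$ and $\overline{u}$, and I will show that its pointwise infimum $u$ in \eqref{2.1} is a continuous viscosity solution attaining the prescribed datum. First I would record two preliminary facts. By the comparison principle for $F(X,\nabla w, D^2 w)=f(w)$ available under ${\bf (F1)}$--${\bf (F2)}$ one has $\underline{u}\le\overline{u}$ in $\Omega$; hence $\overline{u}\in\mathcal{S}$, so $\mathcal{S}\neq\emptyset$ and $u\defeq\inf_{\mathcal{S}}v$ is well defined with $\underline{u}\le u\le\overline{u}$ in $\overline{\Omega}$. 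Since $\underline{u},\overline{u}\in C(\overline{\Omega})$ and $\underline{u}=\overline{u}=g$ on $\partial\Omega$, these two functions act as barriers forcing $u$ to attain $g$ continuously on $\partial\Omega$. Moreover $u$ is a viscosity super-solution: this is the standard stability of the super-solution property under infima --- passing to the lower semicontinuous envelope and testing with smooth functions that touch from below, one recovers the inequality $F(Y,\nabla\phi(Y),D^2\phi(Y))\le f(\phi(Y))$ from the very definition of $\mathcal{S}$.

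The substantive step is the Perron \emph{bump}, proving that $u$ is also a viscosity sub-solution. Assume not: there are $X_0\in\Omega$ and $\phi\in C^2$ touching $u$ from above at $X_0$ with $F(X_0,\nabla\phi(X_0),D^2\phi(X_0))<f(\phi(X_0))$, strictly. Replacing $\phi$ by $\phi+|X-X_0|^4$ one may assume $\phi-u\ge|X-X_0|^4$ near $X_0$; and if $u(X_0)=\underline{u}(X_0)$ then $\phi$ would also touch the sub-solution $\underline{u}$ from above at $X_0$, contradicting the sub-solution inequality for $\underline u$, so one may assume $u(X_0)>\underline{u}(X_0)$. By continuity of $X\mapsto F(X,\nabla\phi(X),D^2\phi(X))$ and of $X\mapsto f(\phi(X))$, there are $r>0$ with $B_r(X_0)\Subset\Omega$ and constants $\theta,\rho>0$ such that on $B_r(X_0)$ one has $F(X,\nabla\phi,D^2\phi)\le f(\phi)-2\theta$ and $\phi\ge\underline{u}+\rho$, while $\phi\ge u+r^4$ on $\partial B_r(X_0)$. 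Fixing $0<\kappa<\min\{\theta/\mathrm{Lip}(f),\,\rho,\,r^4\}$, the function $\psi\defeq\phi-\kappa$ is a classical --- hence viscosity --- strict sub-solution in $B_r(X_0)$, with $F(X,\nabla\psi,D^2\psi)\le f(\psi)-\theta$, satisfying $\underline{u}\le\psi$ there, $\psi>u$ on $\partial B_r(X_0)$, and $\psi(X_0)<u(X_0)$.

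Now I would glue
\[
w(X)\defeq\begin{cases}\min\{u(X),\psi(X)\}, & X\in B_r(X_0),\\ u(X), & X\in\Omega\setminus B_r(X_0).\end{cases}
\]
Since $\psi>u$ near $\partial B_r(X_0)$, one gets $w\in C(\overline{\Omega})$ with $\underline{u}\le w\le\overline{u}$ (indeed $w\le u\le\overline{u}$ and $w\ge\underline{u}$). Crucially, $w$ is a viscosity super-solution: if a smooth $\phi_0$ touches $w$ from below at $Y$ with $w(Y)=u(Y)$, the super-solution property of $u$ applies directly; if instead $w(Y)=\psi(Y)<u(Y)$, the contact forces $\nabla\phi_0(Y)=\nabla\psi(Y)$ and $D^2\phi_0(Y)\le D^2\psi(Y)$, so monotonicity of $F$ in the matrix slot and the strict sub-solution inequality for $\psi$ give $F(Y,\nabla\phi_0(Y),D^2\phi_0(Y))\le F(Y,\nabla\psi(Y),D^2\psi(Y))\le f(\psi(Y))-\theta<f(w(Y))$. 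Hence $w\in\mathcal{S}$, so $u\le w$; but $w(X_0)\le\psi(X_0)<u(X_0)$ --- a contradiction. Therefore $u$ is a sub-solution as well.

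Finally, $u$ being simultaneously a sub- and a super-solution of $F(X,\nabla w,D^2 w)=f(w)$ with boundary datum $g$, the comparison principle (applied to its upper and lower semicontinuous envelopes) forces these envelopes to coincide, so $u\in C(\overline{\Omega})$ is the desired continuous viscosity solution. I expect the bump construction to be the main obstacle: it has to be carried out entirely within the viscosity framework, with the corrected function $w$ kept pinched between $\underline{u}$ and $\overline{u}$ --- which is exactly what the reduction $u(X_0)>\underline{u}(X_0)$ and the choice $\kappa\le\rho$ serve for --- and the semicontinuity bookkeeping for the envelopes must be handled with care. The other, genuinely structural, ingredient is the comparison principle for $F(X,\nabla w,D^2 w)=f(w)$: since $f$ need not be monotone this is not automatic, and one obtains it under ${\bf (F1)}$--${\bf (F2)}$ from the Lipschitz bound on $f$ by the classical device of comparing against a suitably tilted strict super-solution (cf.\ \cite{RT, CC}); it is this principle that underpins both the ordering $\underline{u}\le\overline{u}$ and the final continuity of the Perron solution.
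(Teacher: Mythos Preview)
The paper does not actually prove this theorem: it is quoted verbatim from \cite{RT} and used as a black box for the existence result (Theorem~\ref{ExistMinSol}). So there is no in-paper argument to compare against; your outline is the standard Perron construction and is what one would expect \cite{RT} to contain.

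Your argument is essentially correct, but one terminological slip deserves fixing because it momentarily makes the logic look wrong. After the bump, you write that $\psi\defeq\phi-\kappa$ is a ``strict sub-solution'' with $F(X,\nabla\psi,D^2\psi)\le f(\psi)-\theta$. With the sign convention of this paper (sub-solutions satisfy $F\ge f$, super-solutions $F\le f$), that inequality makes $\psi$ a strict \emph{super}-solution, which is exactly what you need: the purpose of the bump is to manufacture a member of $\mathcal{S}$ that dips below $u$ at $X_0$, contradicting $u=\inf_{\mathcal{S}}$. Your subsequent verification that $w=\min\{u,\psi\}$ is a super-solution is correct and consistent with this reading; only the label is wrong.

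Two smaller points. First, the stability of super-solutions under infima really gives that the lower semicontinuous envelope $u_*$ is a super-solution, and the sub-solution argument should be run on $u^*$; you allude to this at the end, but it is cleaner to set it up from the start. Second, your use of ``continuity of $X\mapsto F(X,\nabla\phi(X),D^2\phi(X))$'' in shrinking $r$ is the one place where structural hypotheses on $F$ beyond {\bf (F1)}--{\bf (F2)} enter; in the $C$-viscosity setting adopted here this is fine, but it is worth flagging that the argument as written needs $F$ continuous in $X$ (or the $L^p$-viscosity machinery of \cite{Wint}).
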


Existence of Perron's type solution to \eqref{Equation Pe} will follow by choosing $\underline{u} \defeq \underline{u}^{\varepsilon}$ and $\overline{u} \defeq \overline{u}^{\varepsilon}$ as solutions to the boundary value problems:
$$
\begin{array}{ccc}
\left\{
\begin{array}{rcccl}
F(X, \nabla \underline{u}^{\varepsilon}, D^2 \underline{u}^{\varepsilon}) & = & \sup\limits_{[0, \infty)} \zeta_{\varepsilon}(u^{\varepsilon}(X)) & \mbox{in} & \Omega\\
\underline{u}^{\varepsilon}(X) & = & \varphi(X) & \mbox{on} & \partial \Omega
\end{array}
\right.

& \mbox{and} &

\left\{
\begin{array}{rcccl}
F(X, \nabla \overline{u}^{\varepsilon}, D^2 \overline{u}^{\varepsilon}) & = & 0 & \mbox{in} & \Omega\\
\overline{u}^{\varepsilon}(X) & = & \varphi(X) & \mbox{on} & \partial \Omega,
\end{array}
\right.
\end{array}
$$

We must note that for each $\varepsilon>0$ fixed, existence of such a $\underline{u}^{\varepsilon}$ and $\overline{u}^{\varepsilon}$ follows as consequence of standard methods of sub and super solutions.
Moreover, we have that $\underline{u} \in C(\overline{\Omega})\cap C^{0, 1}(\Omega)$ and $\overline{u} \in C(\overline{\Omega})\cap C^{0, 1}(\Omega)$ are viscosity subsolution and supersolution to \eqref{Equation Pe} respectively. Finally, as consequence of the Theorem \ref{PerMeth} we have the following existence Theorem:

\begin{theorem}[{\bf Existence of Perron's type solutions, \cite{RT}}]\label{ExistMinSol} Given $\Omega \subset \R^n$ be a bounded Lipschitz domain and $g \in C(\partial \Omega)$ be a nonnegative boundary datum. There exists for each $\varepsilon>0$ fixed, a nonnegative Perron's type viscosity solution $u^{\varepsilon} \in C(\overline{\Omega})$ to \eqref{Equation Pe}.
\end{theorem}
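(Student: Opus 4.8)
\emph{Proof proposal.} The plan is to apply the Perron-type construction of Theorem~\ref{PerMeth} with the reaction $f(s)=\zeta_\varepsilon(s)$, boundary datum $g=\varphi$, and the explicit ordered pair of barriers $\underline{u}^\varepsilon \le \overline{u}^\varepsilon$ described immediately after that theorem; the only real task is to verify that the hypotheses of Theorem~\ref{PerMeth} are met. First observe that, for $\varepsilon>0$ fixed, $\zeta_\varepsilon(s)=\varepsilon^{-1}\zeta(s/\varepsilon)$ is smooth, nonnegative and supported in $[0,\varepsilon]$, with
$$
\mathfrak{m}_\varepsilon := \sup_{[0,\infty)}\zeta_\varepsilon = \varepsilon^{-1}\|\zeta\|_{L^\infty([0,1])} \le \varepsilon^{-1}\mathcal{B} < \infty .
$$
In particular $\zeta_\varepsilon$ is bounded and Lipschitz continuous on $[0,\infty)$, hence an admissible right-hand side in Theorem~\ref{PerMeth}.

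Next I would produce the barriers. Let $\overline{u}^\varepsilon$ be the solution of the Dirichlet problem $F(X,\nabla\overline{u}^\varepsilon,D^2\overline{u}^\varepsilon)=0$ in $\Omega$ with $\overline{u}^\varepsilon=\varphi$ on $\partial\Omega$, and let $\underline{u}^\varepsilon$ be the solution of $F(X,\nabla\underline{u}^\varepsilon,D^2\underline{u}^\varepsilon)=\mathfrak{m}_\varepsilon$ in $\Omega$ with $\underline{u}^\varepsilon=\varphi$ on $\partial\Omega$. Existence of such solutions is classical under the structural assumptions \textbf{(F1)}--\textbf{(F2)}: one runs Perron's method at the level of the (constant, hence continuous) right-hand sides, the comparison principle being available for $F$; continuity up to $\partial\Omega$ is obtained by inserting standard barriers, which a bounded Lipschitz domain admits because it satisfies a uniform exterior cone condition and $\varphi$ is continuous. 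The interior Lipschitz estimate of Caffarelli--Cabr\'e then yields $\overline{u}^\varepsilon,\underline{u}^\varepsilon \in C^{0,1}_{\mathrm{loc}}(\Omega)$, so both belong to $C(\overline\Omega)\cap C^{0,1}(\Omega)$, exactly as Theorem~\ref{PerMeth} requires.

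I then check that $\underline{u}^\varepsilon$ and $\overline{u}^\varepsilon$ are respectively a subsolution and a supersolution of \eqref{Equation Pe} and that they are ordered. Since $\zeta_\varepsilon \ge 0$, one has $F(X,\nabla\overline{u}^\varepsilon,D^2\overline{u}^\varepsilon)=0 \le \zeta_\varepsilon(\overline{u}^\varepsilon)$ in the viscosity sense, so $\overline{u}^\varepsilon$ is a viscosity supersolution of \eqref{Equation Pe}; since $\zeta_\varepsilon \le \mathfrak{m}_\varepsilon$, one has $F(X,\nabla\underline{u}^\varepsilon,D^2\underline{u}^\varepsilon)=\mathfrak{m}_\varepsilon \ge \zeta_\varepsilon(\underline{u}^\varepsilon)$, so $\underline{u}^\varepsilon$ is a viscosity subsolution. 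The inequality $\underline{u}^\varepsilon \le \overline{u}^\varepsilon$ in $\Omega$ follows from the comparison principle applied to the equation $F(X,\nabla w,D^2 w)=\mathfrak{m}_\varepsilon$, for which $\underline{u}^\varepsilon$ is a solution and $\overline{u}^\varepsilon$ (satisfying $F=0\le\mathfrak{m}_\varepsilon$) a supersolution, with identical boundary values $\varphi$. Finally, by \textbf{(F2)} the constant $0$ solves $F(X,0,0)=0$ and hence is a subsolution of both $F=0$ and $F=\mathfrak{m}_\varepsilon$; comparing $0$ with $\overline{u}^\varepsilon$ and $\underline{u}^\varepsilon$, whose common boundary datum $\varphi$ is nonnegative, yields $0 \le \underline{u}^\varepsilon \le \overline{u}^\varepsilon$ in $\overline\Omega$.

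With all hypotheses verified, Theorem~\ref{PerMeth} (applied with $f=\zeta_\varepsilon$, $g=\varphi$, $\underline{u}=\underline{u}^\varepsilon$, $\overline{u}=\overline{u}^\varepsilon$) furnishes
$$
u^\varepsilon(X) := \inf_{v\in\mathcal{S}} v(X),
$$
a continuous viscosity solution of $F(X,\nabla w,D^2 w)=\zeta_\varepsilon(w)$ in $\Omega$ which attains $u^\varepsilon=\varphi$ continuously on $\partial\Omega$; that is, $u^\varepsilon \in C(\overline\Omega)$ solves \eqref{Equation Pe}. Since every admissible $v\in\mathcal{S}$ dominates $\underline{u}^\varepsilon$, we obtain $u^\varepsilon \ge \underline{u}^\varepsilon \ge 0$ in $\overline\Omega$, so $u^\varepsilon$ is the desired nonnegative Perron's type solution. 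The only nontrivial point in this argument --- and the place where one must be slightly careful, given that $\Omega$ is merely Lipschitz --- is the construction of the two barrier solutions together with their continuity up to $\partial\Omega$; everything else is a bookkeeping check of the structural inequalities and of the ordering of the barriers.
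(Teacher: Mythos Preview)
Your overall approach---construct the barriers $\underline{u}^\varepsilon$ and $\overline{u}^\varepsilon$ as in the paragraph preceding the theorem and then invoke Theorem~\ref{PerMeth}---is exactly the paper's argument, and the existence portion is fine.

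There is, however, a genuine error in your nonnegativity step. You claim that the constant $0$, because it satisfies $F(X,0,0)=0$, is a subsolution of $F=\mathfrak{m}_\varepsilon$, and from this you conclude $0\le\underline{u}^\varepsilon$. With the paper's sign convention a subsolution of $F=\mathfrak{m}_\varepsilon$ must satisfy $F(\cdot,\nabla\phi,D^2\phi)\ge\mathfrak{m}_\varepsilon$ at any upper test function $\phi$; for $\phi\equiv 0$ this reads $0\ge\mathfrak{m}_\varepsilon>0$, which is false. In fact $\underline{u}^\varepsilon$ \emph{need not} be nonnegative: already for $F=\Delta$ on an interval with $\varphi\equiv 0$ the solution of $\Delta w=\mathfrak{m}_\varepsilon$ is strictly negative in the interior. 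So the chain $u^\varepsilon\ge\underline{u}^\varepsilon\ge 0$ breaks.

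The fix is to argue nonnegativity of $u^\varepsilon$ directly, not through $\underline{u}^\varepsilon$. Since $\zeta_\varepsilon(s)=0$ for $s\le 0$, on the open set $\{u^\varepsilon<0\}$ the function $u^\varepsilon$ solves $F(X,\nabla u^\varepsilon,D^2u^\varepsilon)=0$; it vanishes on the portion of $\partial\{u^\varepsilon<0\}$ inside $\Omega$ and equals $\varphi\ge 0$ on $\partial\Omega$, so the comparison (or minimum) principle forces $u^\varepsilon\ge 0$ there, a contradiction. This is precisely the content of Lemma~\ref{ABP}, which the paper proves separately rather than as part of the Perron construction.
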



\section{Optimal Lipschitz regularity}\label{reglocal}

\hspace{0.6cm}In this section, we shall present the proof of Theorem \ref{principal2}. Thus let us assume the assumptions of problem \eqref{Equation Pe}.

We make a pause as to discuss some remarks which will be important  throughout this work. Firstly it is important to highlight that is always possible to perform a change of variables to flatten the boundary. Indeed, if $\partial \Omega$ is a $C^{1, 1}$ set, the part of $\Omega$ near $\partial \Omega$ can be covered with a finite collection of regions that can be mapped onto half-balls by diffeomorphisms (with portions of $\partial \Omega$ being mapped onto the ``flat" parts of the boundaries of the half-balls). Hence,  we can use a smooth mapping, reducing this way the general case to that one on $B^{+}_{1}$, and, the boundary data would be given on $B^{\prime}_1$.


Previously we start the proof of the global Lipschitz estimative, we need to assure the non-negativity and boundedness of solutions to \eqref{Equation Pe}. This statement is a consequence of the Alexandroff-Bekelman-Pucci Maximum Principle, see \cite{CC} for more details.

\begin{lemma}[{\bf Nonnegativity and boundedness, \cite{RT} and \cite{Tei}}] \label{ABP}
Let $u^{\varepsilon}$ be a viscosity solution to  \eqref{Equation Pe}. Then there exists a universal constant $C>0$ such that
$$
    0 \le u^{\varepsilon}(X) \le C\|\varphi\|_{L^{\infty}(\overline{\Omega})} \quad \mbox{in} \quad \Omega.
$$
\end{lemma}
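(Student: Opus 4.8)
The plan is to treat the two inequalities separately, each as a comparison with an appropriate barrier built from the Alexandroff–Bakelman–Pucci (ABP) estimate. For the lower bound $u^\varepsilon \ge 0$, I would argue by contradiction. The reaction term $\zeta_\varepsilon(s) = \tfrac{1}{\varepsilon}\zeta(s/\varepsilon)$ is supported in $s \in [0,\varepsilon]$, since $\zeta \in C_0^\infty([0,1])$; in particular $\zeta_\varepsilon(s) = 0$ whenever $s \le 0$. Hence on the open set $\{u^\varepsilon < 0\}$ the equation reads $F(X,\nabla u^\varepsilon, D^2 u^\varepsilon) = 0$, so $u^\varepsilon$ is a viscosity solution of a homogeneous uniformly elliptic equation there, and in particular (by (F1) and (F2)) it is a subsolution of the Pucci extremal inequality $\mathcal{P}^-_{\lambda,\Lambda}(D^2 u^\varepsilon) - b|\nabla u^\varepsilon| \le 0$. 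Since $u^\varepsilon = \varphi \ge 0$ on $\partial\Omega$ and $u^\varepsilon = 0$ on $\partial\{u^\varepsilon<0\}\cap\Omega$, the boundary data of $u^\varepsilon$ on $\partial\{u^\varepsilon<0\}$ is nonnegative; the maximum principle for the Pucci operator (Caffarelli–Cabré \cite{CC}) then forces $u^\varepsilon \ge 0$ on $\{u^\varepsilon < 0\}$, a contradiction unless that set is empty. So $u^\varepsilon \ge 0$ throughout $\Omega$.

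For the upper bound, I would use the ABP estimate directly. Knowing $u^\varepsilon \ge 0$, the right-hand side satisfies $0 \le \zeta_\varepsilon(u^\varepsilon(X)) \le \tfrac{1}{\varepsilon}\|\zeta\|_{L^\infty([0,1])}$, which is a priori $\varepsilon$-dependent; the trick is that $\zeta_\varepsilon(u^\varepsilon)$ is nonzero only where $0 \le u^\varepsilon \le \varepsilon$, and there we can absorb it. Concretely, write $u^\varepsilon = v + w$ where $v$ solves $F(X,\nabla v, D^2 v) = 0$ in $\Omega$ with $v = \varphi$ on $\partial\Omega$; then $v$ is bounded by $\|\varphi\|_{L^\infty(\partial\Omega)}$ via the maximum principle, and the difference $w := u^\varepsilon - v$ vanishes on $\partial\Omega$. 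Alternatively, and more cleanly, I would apply the ABP inequality to $u^\varepsilon$ as a subsolution of $\mathcal{P}^-_{\lambda,\Lambda}(D^2 u^\varepsilon) - b|\nabla u^\varepsilon| \le \zeta_\varepsilon(u^\varepsilon)$: this yields
$$
\sup_\Omega u^\varepsilon \le \sup_{\partial\Omega}\varphi + C(n,\lambda,\Lambda,b,\mathrm{diam}\,\Omega)\,\big\|\zeta_\varepsilon(u^\varepsilon)\big\|_{L^n(\{0\le u^\varepsilon\le\varepsilon\})}.
$$
The key observation is that the contact set where the reaction is active shrinks as $\varepsilon \to 0$, and one estimates the $L^n$ norm of $\zeta_\varepsilon(u^\varepsilon)$ over that strip, invoking a nondegeneracy/level-set argument (available in \cite{RT,Tei}) to bound $|\{0 \le u^\varepsilon \le \varepsilon\}|$ by $C\varepsilon^n$ or similar, so that $\|\zeta_\varepsilon(u^\varepsilon)\|_{L^n} \le C\varepsilon^{-1}\cdot\varepsilon = C$ stays bounded independently of $\varepsilon$.

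The main obstacle is precisely this last point: obtaining the $\varepsilon$-independent control of the active set, since a naive bound on $\zeta_\varepsilon(u^\varepsilon)$ loses a factor $\varepsilon^{-1}$. I expect to sidestep it by citing the corresponding result from Ricarte–Teixeira \cite{RT} and Teixeira \cite{Tei}, where this boundedness is established in the local setting; the global statement follows by combining their interior estimate with the maximum principle up to $\partial\Omega$, using that $\varphi \in C^{1,\gamma}(\overline\Omega)$ with $\|\varphi\|_{C^{1,\gamma}(\overline\Omega)} \le \mathcal{A}$ gives uniform boundary control. Once both inequalities are in hand the constant $C$ depends only on universal parameters and $\mathrm{diam}\,\Omega$, as claimed.
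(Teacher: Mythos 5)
Your lower-bound argument is correct and matches the intended use of the maximum principle: since $\zeta \in C_0^\infty([0,1])$ is nonnegative, $\zeta_\varepsilon$ vanishes on $(-\infty,0)$, so on $\{u^\varepsilon < 0\}$ the equation reduces to $F(X,\nabla u^\varepsilon, D^2 u^\varepsilon)=0$; comparison with the zero boundary data on $\partial\{u^\varepsilon<0\}$ (and $\varphi\ge 0$ on $\partial\Omega$) forces that set to be empty.

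For the upper bound you have, however, inverted the ABP mechanism and introduced a circular step. The ABP inequality for subsolutions reads $\sup_\Omega u \le \sup_{\partial\Omega} u^+ + C\,\|f^-\|_{L^n(\Omega)}$ --- only the \emph{negative part} of the right-hand side enters. Since $\zeta_\varepsilon(u^\varepsilon)\ge 0$, that negative part vanishes identically, so there is no $\varepsilon^{-1}$ factor to absorb and no need whatsoever to control the measure of the strip $\{0\le u^\varepsilon\le\varepsilon\}$. Concretely, setting $N=0$, $\overrightarrow{q}=0$ in {\bf (F1)} and using {\bf (F2)} gives
$$
\mathcal{P}^{+}_{\lambda,\Lambda}(D^2 u^\varepsilon) + b\,|\nabla u^\varepsilon| \;\ge\; F(X,\nabla u^\varepsilon, D^2 u^\varepsilon) \;=\; \zeta_\varepsilon(u^\varepsilon)\;\ge\;0 ,
$$
so $u^\varepsilon$ is a subsolution with zero forcing, and the plain maximum principle already yields $\sup_\Omega u^\varepsilon \le \sup_{\partial\Omega}\varphi \le \|\varphi\|_{L^\infty(\overline\Omega)}$. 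Moreover the ``nondegeneracy/level-set argument'' you propose to bound $|\{0\le u^\varepsilon\le\varepsilon\}|$ is not available at this stage: in \cite{RT,Tei} such density estimates are derived \emph{after} the uniform $L^\infty$ and Lipschitz bounds that this very lemma feeds into, so invoking them here would be circular. The sign observation above is both the correct and the far shorter route, and is what the paper means by ``consequence of ABP''.
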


\bigskip
 We will now establish a universal bound for the Lipschitz norm of $u^{\varepsilon}$ up to the boundary.  The proof will be divided in two cases.
\vspace{0.2cm}
\begin{center}
 {\large \bf Case 1: Lipschitz regularity up to the boundary in the region $\{0 \le u^{\varepsilon} \le \varepsilon\}$}.
\end{center}
\vspace{0.2cm}
\begin{theorem} \label{prop1}
Let $u^{\epsilon}$ be a viscosity solution to \eqref{Equation Pe}. For $X \in \{0 \le u^{\varepsilon} \le \varepsilon\} \cap B^{+}_{\frac{1}{2}}$ there exists a universal constant $C_1>0$ independent of $\varepsilon$ such that
$$
	|\nabla u^{\epsilon}(X)| \le C_1.
$$
\end{theorem}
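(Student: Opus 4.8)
The plan is to establish a quantitative linear growth estimate for $u^{\varepsilon}$ away from its $\varepsilon$-level set and then to extract the pointwise gradient bound by rescaling at the natural scale $\varepsilon$. Following the flattening reduction recalled above, I assume throughout that $\Omega=B_1^{+}$, that $\varphi$ is prescribed on $B_1'$, and that $F$ still satisfies \textbf{(F1)}--\textbf{(F3)} with universal constants; by Lemma \ref{ABP} one has $0\le u^{\varepsilon}\le C\mathcal{A}$.

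\textbf{Step 1: a uniform linear growth estimate.} The heart of the matter is the bound
$$
\sup_{B_r(X_0)\cap\overline{B_1^{+}}}u^{\varepsilon}\ \le\ C_0\,\max\{r,\varepsilon\},\qquad X_0\in\{0\le u^{\varepsilon}\le\varepsilon\}\cap\overline{B_{1/2}^{+}},\quad 0<r\le\tfrac14,
$$
with $C_0$ depending only on $n,\lambda,\Lambda,b$ and $\mathcal{A}$. The mechanism is the one from the interior theory of Teixeira \cite{Tei} and Ricarte--Teixeira \cite{RT}: since $\zeta$ is supported in $[0,1]$, the truncation $w:=(u^{\varepsilon}-\varepsilon)^{+}$ is non-negative, bounded by $C\mathcal{A}$, vanishes at $X_0$, is a global viscosity subsolution of $F(X,\nabla w,D^2w)=0$ (hence $\mathcal{P}^{+}_{\lambda,\Lambda}(D^2w)+b|\nabla w|\ge 0$), and actually solves $F(X,\nabla w,D^2w)=0$ inside the open set $\{u^{\varepsilon}>\varepsilon\}=\{w>0\}$. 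Working on dyadic radii $2^{-j}\in[\varepsilon,\tfrac14]$ one runs a dichotomy — either $\sup_{B_{2^{-j}}(X_0)}u^{\varepsilon}\le\tfrac12\sup_{B_{2^{-j+1}}(X_0)}u^{\varepsilon}$, or $\sup_{B_{2^{-j+1}}(X_0)}u^{\varepsilon}\le C_0\,2^{-j}$ — whose verification combines the local maximum principle for the Pucci class with the interior Harnack inequality for $w$ on $\{w>0\}$; iterating from the trivial bound at the unit scale produces the displayed estimate. Near the flat boundary $B_1'$ the same conclusion is reached by comparison with barriers built in the half-ball out of the distance to $B_1'$: on the part of $B_1'$ that meets $\{u^{\varepsilon}\le\varepsilon\}$ one has $\varphi\le\varepsilon$, so $\|\varphi\|_{C^{1,\gamma}}\le\mathcal{A}$ forces $\varphi\le\varepsilon+\mathcal{A}\,|\cdot-X_0|$ there, and a supersolution of $\mathcal{P}^{-}_{\lambda,\Lambda}(D^2\Psi)-b|\nabla\Psi|=0$ lying above this datum dominates $u^{\varepsilon}$. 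This is precisely where the scheme of Gurevich \cite{Gu} and Karakhanyan \cite{K} enters and where one must keep every constant independent of $\varepsilon$.

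\textbf{Step 2: from growth to the gradient.} Fix $X_0\in\{0\le u^{\varepsilon}\le\varepsilon\}\cap B_{1/2}^{+}$, and let $Y_0:=X_0$ when $\operatorname{dist}(X_0,B_1')\ge\varepsilon$ and $Y_0$ be the vertical projection of $X_0$ on $B_1'$ otherwise. Set $v(Z):=\varepsilon^{-1}u^{\varepsilon}(Y_0+\varepsilon Z)$. Then $v$ solves $F_{\varepsilon}(Z,\nabla v,D^2v)=\zeta(v)$ on $B_1$ (resp.\ $B_1^{+}$), where $F_{\varepsilon}(Z,\overrightarrow{p},M):=\varepsilon\,F(Y_0+\varepsilon Z,\overrightarrow{p},\varepsilon^{-1}M)$ again satisfies \textbf{(F1)}--\textbf{(F3)} with the same ellipticity constants and drift bounded by $b$; the right-hand side is bounded by $\mathcal{B}$; and, by Step 1 with $r\simeq\varepsilon$, $\|v\|_{L^{\infty}}\le C(n,\lambda,\Lambda,b,\mathcal{A})$. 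In the boundary case the rescaled datum $\varepsilon^{-1}\varphi(Y_0+\varepsilon\,\cdot)$ is, again by Step 1 together with $\|\varphi\|_{C^{1,\gamma}}\le\mathcal{A}$, uniformly bounded in $C^{1,\gamma}(B_1')$. The interior $C^{1,\alpha}$ estimates of \cite{CC} (interior case) and the up-to-the-flat-boundary $C^{1,\alpha}$ estimates with $C^{1,\gamma}$ data (see \cite{Wint}) then give $\|\nabla v\|_{L^{\infty}(B_{1/2})}\le C_1$; reading this at $Z=\varepsilon^{-1}(X_0-Y_0)$ yields $|\nabla u^{\varepsilon}(X_0)|=|\nabla v(\varepsilon^{-1}(X_0-Y_0))|\le C_1$.

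\textbf{Main obstacle.} The delicate step is Step 1, and inside it the behaviour near $\partial\Omega$: the fixed datum $\varphi$ does not rescale, the set $\{\varphi\le\varepsilon\}$ on which $u^{\varepsilon}$ can be small on the boundary shrinks as $\varepsilon\to0$, and one must produce half-ball barriers — using the $C^{1,1}$ geometry of $\partial\Omega$ and the $C^{1,\gamma}$ modulus of $\varphi$ — with genuinely $\varepsilon$-free constants, in the presence of the gradient term $b|\nabla u|$. Pushing the Gurevich--Karakhanyan argument through in this fully nonlinear, up-to-the-boundary setting is the crux.
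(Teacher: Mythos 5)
Your two-step architecture — a linear growth estimate from $\{0\le u^{\varepsilon}\le\varepsilon\}$ followed by a rescaling at the scale $\max\{r,\varepsilon\}$ and an up-to-the-boundary $C^{1,\alpha}$ estimate — is the right shape, and Step~2 as written does match what the paper does (compare with the $F$-harmonic replacement $h$, then apply Winter's boundary estimates). The gap is in Step~1, and it occurs exactly at the place you yourself flag as ``the crux.''

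Concretely, your boundary barrier relies on the chain
``on the part of $B_1'$ meeting $\{u^{\varepsilon}\le\varepsilon\}$ one has $\varphi\le\varepsilon$, so $\varphi\le\varepsilon+\mathcal{A}|\cdot-X_0|$ there, and a supersolution above this datum dominates $u^{\varepsilon}$.''
This is circular for the case that actually matters. If $X_0\in\{0\le u^{\varepsilon}\le\varepsilon\}$ lies in the interior at depth $\delta(X_0)<\varepsilon$, nothing a priori forces the nearby boundary datum to be small: the boundary portion $B_r'(\hat{X}_0)$ need not intersect $\{u^{\varepsilon}\le\varepsilon\}$ at all, and $\varphi(\hat{X}_0)$ could in principle be of order one while $u^{\varepsilon}(X_0)\le\varepsilon$. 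The very content of the boundary estimate is that this cannot happen — that $u^{\varepsilon}(\hat{X})=\varphi(\hat{X})\le C_0\varepsilon$ — and one cannot obtain it by a direct comparison with a barrier whose construction already presupposes $\varphi\lesssim\varepsilon$ near $\hat{X}_0$. Your Lipschitz bound $\varphi\le\varepsilon+\mathcal{A}|\cdot-X_0|$ is only available once you know a boundary point within distance $O(\varepsilon)$ of $X_0$ has $\varphi\le\varepsilon$, which is what you are trying to prove.

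The paper closes this gap with a genuinely different mechanism: it argues by contradiction. Assuming $u^{\varepsilon}(\hat{X})\ge k\varepsilon$ with $k\gg1$, it sets $r_0=\mathrm{dist}(\hat{X},\{0\le u^{\varepsilon}\le\varepsilon\})$ and picks a realizing point $X_0$. If $X_0$ lies inside the cone $\mathcal{C}_{\hat{X}}$, then the interior Harnack inequality in $B_{r_0/2}(X_0)$ gives an upper bound $u^{\varepsilon}\le c\varepsilon$ on $B_{r_0/4}(X_0)$, while the $C^{1,\gamma}$ bound on $\varphi$ plus the boundary propagation Lemma~\ref{lemma2.0} (the half-ball reflection Harnack) give a lower bound $u^{\varepsilon}\ge c\varepsilon(k-\mathcal{A})$ on $B^{+}_{3r_0/4}(\hat{X})$; these conflict at a common boundary point once $k$ is large. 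If $X_0$ is outside the cone, the argument chains through projections $\hat{X}_0,\hat{X}_1,\dots$ with geometrically decaying distances $r_j$, terminating either in the previous case or at a boundary accumulation point $X_\infty$ where $\varphi(X_\infty)=\varepsilon$, contradicting $\varphi(X_\infty)\ge\varphi(\hat{X})-2\mathcal{A}r_0\gg\varepsilon$. None of this contradiction-and-chaining machinery appears in your sketch, and it is precisely what replaces the barrier you cannot yet build. So the proposal is incomplete: Step~1 near $B_1'$ needs to be replaced by an argument of this type (or an equivalent one) before Step~2 can be invoked.
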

\begin{proof}
 We denote by
$$
	\delta(X) \defeq \textrm{dist}(X, \mathcal{H})
$$
the vertical distance. If $\delta(X) \ge \epsilon$, then $B_{\varepsilon}(X) \subset B^{+}_{1}$ for $\varepsilon \ll 1$. Therefore, from local gradient bounds \cite{RT, Tei} ,  there exists a universal constant $C_0>0$ independent of $\varepsilon$, such that
$$
	|\nabla u^{\epsilon}(X)| \le C_0.
$$
On the other hand, if $\delta(X) < \epsilon$, then it is sufficient to prove that there exists a
universal constant $C_0>0$ independent of $\varepsilon$, such that
\begin{equation} \label{new}
	u^{\epsilon}(\hat{X}) \le C_0 \epsilon.
\end{equation}
Indeed, suppose that \eqref{new} holds. Consider
$h \colon \overline{B}^{+}_{1} \rightarrow \mathbb{R}$ to  be the viscosity solution to the Dirichlet problem
$$
\left\{
\begin{array}{rclcl}
F(Y, \nabla h, D^2 h)  &=&  0 & \mbox{in} & B^{+}_{1}\\
h & = & u^{\epsilon}  & \mbox{on} &  \partial B^{+}_{1}.
\end{array}
\right.
$$
From $C^{1,\alpha}$ regularity estimates up to the boundary (see for instance \cite[Theorem 3.1]{Wint}), we know that $h \in C^{1,\alpha}\left(\overline{B}^{+}_{\frac{3}{4}}\right)$
with the following estimate
$$
		| \nabla h(Y)| \le c \left(\|h\|_{L^{\infty}(B^{+}_{1})} + \|\varphi\|_{C^{1,\gamma}(B^{\prime}_{1})}\right) \le C \quad \textrm{in} \quad B^{+}_{\frac{3}{4}}
$$
and by Comparison Principle we  have
$$
   u^{\epsilon} \le h \quad \mbox{in} \quad B^{+}_{1}.
$$
Hence, it follows from assumption \eqref{new} that
$$
	u^{\epsilon}(Y) \le h(Y) \le h(\hat{X}) + C |Y - \hat{X}| \le C \epsilon \quad \textrm{if} \quad Y \in B^{+}_{2 \epsilon}(\hat{X})
$$
Then, again applying $C^{1,\alpha}$ regularity estimates from \cite{Wint}, we obtain
$$
	|\nabla u^\varepsilon(X)| \le C_0(n, \lambda, \Lambda, b, \mathcal{B}).
$$

In order to prove \eqref{new} suppose, by purpose of contradiction, there exists $\epsilon >0$ such that
$$
	u^{\epsilon}(\hat{X}) \ge k \epsilon  \quad \text{ for } \quad k \gg 1.
$$
We shall denote
$$
  r_0 \defeq \textrm{dist}(\hat{X}, \{0 \le u^{\varepsilon} \le \varepsilon\}).
$$
Consider $X_0 \in \{0 \le u^{\varepsilon} \le \varepsilon\} \cap \partial B_{r_0}^{+}(\hat{X})$ a point
to which the distance is achieved, i.e.,
$$
	r_0 = |X_0 - \hat{X}|.
$$
Thereafter, let
$
	\mathcal{C}_{\hat{X}}
$
be the cone with vertex at $\hat{X} \in \mathcal{H}$. Suppose initially that $X_0 \in \mathcal{C}_{\hat{X}}$ then $B_{\frac{r_0}{2}}(X_0) \subset B^{+}_{1}$ .

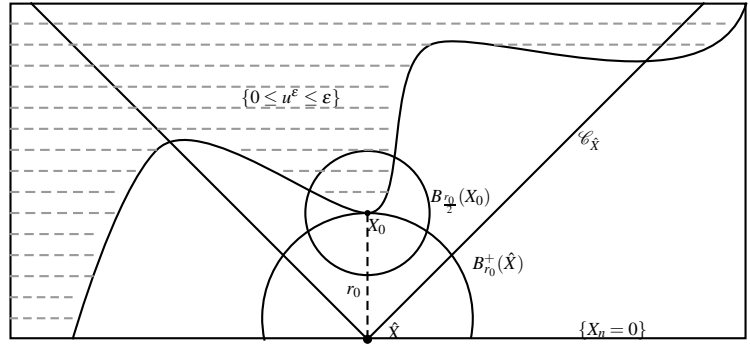
\begin{figure}[h]
\begin{center}

%
%
\psscalebox{0.7 0.7} 
{
\begin{pspicture}(0,-4.244144)(14.419121,4.244144)
\definecolor{colour0}{rgb}{0.6,0.6,0.6}
\psframe[linecolor=black, linewidth=0.04, dimen=outer](14.4,2.244144)(0.4,-4.155856)
\rput{-91.288635}(11.116827,3.3578575){\psarc[linecolor=black, linewidth=0.04, dimen=outer](7.2,-3.755856){2.0}{80.20546}{283.56296}}
\psbezier[linecolor=black, linewidth=0.04](1.6,-4.155856)(1.6,-4.155856)(2.4,-1.3558561)(3.2,-0.555856)(4.0,0.24414398)(6.4,-1.755856)(7.2,-1.755856)(8.0,-1.755856)(7.490642,1.0281296)(8.4,1.444144)(9.309358,1.8601584)(13.720721,0.031729687)(14.4,2.244144)
\psline[linecolor=black, linewidth=0.04, linestyle=dashed, dash=0.17638889cm 0.10583334cm](7.2,-1.755856)(7.2,-4.155856)
\pscircle[linecolor=black, linewidth=0.04, dimen=outer](7.2,-1.755856){1.2}
\rput[bl](7.2,-2.1558561){$X_0$}
\psline[linecolor=black, linewidth=0.04](7.2,-4.155856)(0.8,2.244144)
\psline[linecolor=black, linewidth=0.04](7.2,-4.155856)(13.6,2.244144)
\rput{4.909062}(-0.32775682,-0.6656109){\rput[bl](7.6,-4.155856){$\hat{X}$}}
\rput{-219.83366}(10.066853,-11.959208){\psdots[linecolor=black, dotsize=0.15972319](7.2,-4.155856)}
\psdots[linecolor=black, dotsize=0.1686082](7.2,-4.155856)
\rput[bl](6.8,-3.355856){$r_0$}
\rput[bl](8.4,-1.755856){$B_{\frac{r_0}{2}}(X_0)$}
\rput[bl](9.2,-2.955856){$B_{r_0}^{+}(\hat{X})$}
\rput[bl](11.2,-0.555856){$\mathcal{C}_{\hat{X}}$}
\rput[bl](11.2,-4.155856){$\{X_n=0\}$}
\rput{134.29721}(10.971626,-8.135344){\psdots[linecolor=black, dotsize=0.11533203](7.2,-1.755856)}
\rput[bl](4.8,0.24414398){$\{0\leq u^{\varepsilon}\leq \varepsilon\}$}
\pscustom[linecolor=black, linewidth=0.04]
{
\newpath
\moveto(6.0,-0.15585601)
}

\pscustom[linecolor=black, linewidth=0.04]
{
\newpath
\moveto(8.4,-0.15585601)
}

\pscustom[linecolor=black, linewidth=0.04]
{
\newpath
\moveto(4.4,-0.555856)
}
\psline[linecolor=colour0, linewidth=0.04, linestyle=dashed, dash=0.17638889cm 0.10583334cm](0.4,-3.755856)(1.6,-3.755856)
\psline[linecolor=colour0, linewidth=0.04, linestyle=dashed, dash=0.17638889cm 0.10583334cm](0.4,-3.355856)(1.6,-3.355856)
\psline[linecolor=colour0, linewidth=0.04, linestyle=dashed, dash=0.17638889cm 0.10583334cm](0.4,-2.955856)(2.0,-2.955856)
\psline[linecolor=colour0, linewidth=0.04, linestyle=dashed, dash=0.17638889cm 0.10583334cm](0.4,-2.555856)(2.0,-2.555856)
\psline[linecolor=colour0, linewidth=0.04, linestyle=dashed, dash=0.17638889cm 0.10583334cm](0.4,-2.1558561)(2.0,-2.1558561)
\psline[linecolor=colour0, linewidth=0.04, linestyle=dashed, dash=0.17638889cm 0.10583334cm](0.4,-1.755856)(2.4,-1.755856)
\psline[linecolor=colour0, linewidth=0.04, linestyle=dashed, dash=0.17638889cm 0.10583334cm](0.4,-1.3558561)(2.4,-1.3558561)
\psline[linecolor=colour0, linewidth=0.04, linestyle=dashed, dash=0.17638889cm 0.10583334cm](0.4,-0.955856)(2.8,-0.955856)
\psline[linecolor=colour0, linewidth=0.04, linestyle=dashed, dash=0.17638889cm 0.10583334cm](0.4,-0.555856)(3.2,-0.555856)
\psline[linecolor=colour0, linewidth=0.04, linestyle=dashed, dash=0.17638889cm 0.10583334cm](0.4,-0.15585601)(7.6,-0.15585601)
\psline[linecolor=colour0, linewidth=0.04, linestyle=dashed, dash=0.17638889cm 0.10583334cm](0.4,0.24414398)(7.6,0.24414398)
\psline[linecolor=colour0, linewidth=0.04, linestyle=dashed, dash=0.17638889cm 0.10583334cm](0.4,0.644144)(7.6,0.644144)
\psline[linecolor=colour0, linewidth=0.04, linestyle=dashed, dash=0.17638889cm 0.10583334cm](0.4,1.044144)(8.0,1.044144)
\psline[linecolor=colour0, linewidth=0.04, linestyle=dashed, dash=0.17638889cm 0.10583334cm](0.4,1.444144)(8.4,1.444144)
\psline[linecolor=colour0, linewidth=0.04, linestyle=dashed, dash=0.17638889cm 0.10583334cm](0.4,1.844144)(14.0,1.844144)
\psline[linecolor=colour0, linewidth=0.04, linestyle=dashed, dash=0.17638889cm 0.10583334cm](9.6,1.444144)(13.6,1.444144)
\psline[linecolor=colour0, linewidth=0.04, linestyle=dashed, dash=0.17638889cm 0.10583334cm](7.6,-0.555856)(4.8,-0.555856)
\psline[linecolor=colour0, linewidth=0.04, linestyle=dashed, dash=0.17638889cm 0.10583334cm](7.6,-0.955856)(5.6,-0.955856)
\psline[linecolor=colour0, linewidth=0.04, linestyle=dashed, dash=0.17638889cm 0.10583334cm](7.6,-1.3558561)(6.4,-1.3558561)
\end{pspicture}
}

\end{center}
\caption{Geometric argument for the case $X_0 \in \mathcal{C}_{\hat{X}}$.}
\end{figure}
Now,
 let us define, $v^{\epsilon} : B_1 \rightarrow \mathbb{R}$ by
$$
	v^{\epsilon}(Y) \defeq \frac{u^{\epsilon}(X_0 + (r_0 /2)Y)}{\epsilon}.
$$
Therefore, $v^{\epsilon}$ satisfies in the viscosity sense
$$
	 F_{\varepsilon}(Y, \nabla v^{\varepsilon}, D^2 v^{\epsilon}) =  \frac{1}{\epsilon^{2}} \left(\frac{r_0}{2}\right)^{2} \zeta(v^{\epsilon}) \defeq \mathfrak{g}(Y),
$$
where
\begin{equation} \label{escale}
F_{\varepsilon}(Y, \overrightarrow{p}, M) \defeq \frac{1}{\varepsilon} \left(\frac{r_0}{2}\right)^2 F \left(X_0 + \frac{r_0}{2} Y,   \frac{2\varepsilon}{r_0} \cdot p, \varepsilon \left(\frac{2}{r_0}\right)^2 M\right).
\end{equation}




Now note that $\mathfrak{g} \in L^{\infty}(B_1)$, since $r_0 < \epsilon$ and $F_{\varepsilon}$ satisfies ${\bf (F1)-(F3)}$ with constant $\tilde{b} = \frac{r_0}{2}\cdot b$.
Moreover, since $v^{\epsilon}(0) \le 1$ it follows from Harnack inequality that
$$
    v^{\epsilon}(Y) \le c \quad \mbox{for} \quad Y \in B_{\frac{1}{2}},
$$
i.e.,
$$
	u^{\epsilon}(X) \le c \epsilon, \quad X \in B_{\frac{r_0}{4}}(X_0).
$$
Consider now $Z \in B^{\prime   }_{r_0}(\hat{X})$. It follows that
$$
	\varphi(Z) \ge \varphi(\hat{X}) - \mathcal{A} \cdot |Z-\hat{X}| \ge k \epsilon -r_0 \cdot  \mathcal{A}  \ge (k-\mathcal{A}) \epsilon,
$$
since $r_0 < \epsilon$. Define the scaled function $w^{\epsilon} : B^{+}_{1} \rightarrow \mathbb{R}$,
$$
	w^{\epsilon}(Y) \defeq \frac{u^{\epsilon}(\hat{X} + r_0 Y)}{\epsilon}.
$$
It readily follows that

$$
\left\{
\begin{array}{rcl}
F_{\varepsilon}(Y, \nabla w^{\varepsilon}, D^2 w^{\epsilon})  =  0 & \mbox{in} & B^{+}_{1}\\
w^{\epsilon}(Y) \ge k-\mathcal{A} & \mbox{on} &  B^{\prime}_{1},
\end{array}
\right.
$$
where $F_{\varepsilon}$ is as in \eqref{escale}. Therefore according to Lemma  \ref{lemma2.0},
$$
    w^{\epsilon}(Y) \ge c(k-\mathcal{A}) \quad \mbox{in} \quad B^{+}_{\frac{3}{4}}.
$$
In other words, we have reached that
$$
	u^{\epsilon}(X) \ge c \epsilon (k-\mathcal{A}) \quad \textrm{in} \quad B^{+}_{\frac{3r_0}{4}}(\hat{X}).
$$
Hence
$$
	c \epsilon (k-\mathcal{A}) \le u^{\epsilon}(Z_0) \le c \epsilon,  \quad \forall \,\,\,\, Z_0 \in \partial B_{\frac{3r_0}{4}}^{+}(\hat{X}) \cap \partial B_{\frac{r_0}{4}}(X_0),
$$
which leads to a contradiction for $k \gg 1$.

On the one hand if $X_0 \not\in \mathcal{C}_{\hat{X}}$, choose $X_1 \in \{0 \le u^{\varepsilon} \le \varepsilon\}$
such that
$$
	r_1 \defeq \textrm{dist}(\hat{X}_{0}, \{0 \le u^{\varepsilon} \le \varepsilon\}) = |\hat{X}_{0} - X_1|.
$$
From triangular inequality and the fact that $r_1 \le \frac{r_0}{2}$ we have
$$
	|X_1 - \hat{X}| \le |X_1 - \hat{X}_{0}| + |\hat{X}_{0} - \hat{X}| \le r_1 +r_0 \le \frac{r_0}{2} + r_0.
$$
If $X_1 \in \mathcal{C}_{\hat{X}_{0}}$ the result follows from previous analysis. Otherwise,
let $X_{2}$ be such that
$$
	r_2 \defeq \textrm{dist}(\hat{X}_{1}, \{0 \le u^{\varepsilon} \le \varepsilon\}) = |\hat{X}_{1} - X_2|.
$$
As before we have
 $$
	|X_2 - \hat{X}| \le |\hat{X}_{1} - X_2| + |\hat{X}_{1} - \hat{X}| \le \frac{r_0}{4} + \frac{r_0}{2} + r_0,
$$
since $r_2 \le \frac{r_1}{2} \le \frac{r_0}{4}$. Observe that this process must finish up within a finite number of steps.  Indeed, suppose that we have a sequence of points
$
	X_j \in \partial \{0\le u^{\varepsilon} \le \varepsilon\}, \quad X_{j+1} \not \in \mathcal{C}_{\hat{X}_{j}} \,\,\, (j=1,2,\ldots)
$
satisfying,
$$
	r_{j+1} \defeq \textrm{dist}(\hat{X}_{j}, \{0 \le u^{\varepsilon} \le \varepsilon\}) = |X_{j+1} - \hat{X}_{j}|
$$
and
\begin{equation} \label{ind}
	r_{j+1} \le \frac{r_{j}}{2} \le \frac{r_0}{2^{j+1}}.
\end{equation}
Thus, it follows from \eqref{ind} that
$$
	|X_j - \hat{X}| \le r_0 + r_0 \sum_{i=1}^{j} \frac{1}{2^i} \le 2r_0.
$$
Therefore, up to a subsequence, $X_j \to X_{\infty} \in B^{\prime}_{2r_0}(\hat{X})$ with $\varphi(X_{\infty}) = \varepsilon$. However,
$$
	\varphi(X_{\infty}) \ge \varphi(\hat{X}) - \mathcal{A} \cdot | \hat{X}- X_{\infty} | \ge \varepsilon (k-2 \mathcal{A}) \gg \varepsilon
 $$
for $k \gg 1$ which drives us to a contradiction, and, hence the assertion \eqref{new} is proved.
\end{proof}

\begin{figure}[h]
\begin{center}
\psscalebox{0.5 0.5} 
{
\begin{pspicture}(0,-4.0261793)(15.637154,4.0261793)
\definecolor{colour0}{rgb}{0.4,0.4,0.4}
\psframe[linecolor=black, linewidth=0.04, dimen=outer](15.62,4.0261793)(0.02,-3.9738207)
\rput{-77.596115}(8.217295,3.0733209){\psarc[linecolor=black, linewidth=0.04, dimen=outer](6.02,-3.5738208){2.8}{69.98726}{265.34958}}
\psbezier[linecolor=black, linewidth=0.04](15.62,2.0261793)(14.42,4.0261793)(8.82,-5.5738206)(10.42,-1.9738208)(12.02,1.6261792)(8.42,-2.7738209)(8.42,-1.9738208)(8.42,-1.1738209)(9.22,1.6261792)(7.62,1.6261792)(6.02,1.6261792)(5.62,-0.7738208)(3.62,0.4261792)(1.62,1.6261792)(1.9644444,3.5690362)(0.02,2.8261793)
\psline[linecolor=colour0, linewidth=0.04, linestyle=dotted, dotsep=0.10583334cm](8.42,-1.9738208)(8.42,-3.9738207)
\rput[bl](6.02,-3.5738208){$\hat{X}$}
\rput[bl](8.82,-1.5738208){$X_0$}
\rput[bl](8.02,-3.5738208){$\hat{X_0}$}
\rput{-79.40087}(10.743901,5.393249){\psarc[linecolor=black, linewidth=0.04, dimen=outer](8.62,-3.7738209){1.8}{72.382065}{265.051}}
\rput[bl](10.42,-2.7738209){$X_1$}
\psline[linecolor=colour0, linewidth=0.04, linestyle=dotted, dotsep=0.10583334cm](8.42,-3.9738207)(10.02,-2.7738209)
\psline[linecolor=colour0, linewidth=0.04, linestyle=dotted, dotsep=0.10583334cm](6.02,-3.9738207)(8.42,-1.9738208)
\rput[bl](9.22,-3.1738207){$r_1$}
\rput[bl](7.22,-3.1738207){$r_0$}
\psline[linecolor=colour0, linewidth=0.04, linestyle=dotted, dotsep=0.10583334cm](10.02,-2.7738209)(10.02,-3.9738207)
\rput[bl](9.62,-3.9738207){$\hat{X_1}$}
\rput[bl](0.42,1.2261792){$\mathcal{C}_{\hat{X}}$}
\rput[bl](5.22,2.4261792){$\{0 \leq u^{\varepsilon} \leq \varepsilon\}$}
\psline[linecolor=black, linewidth=0.04, linestyle=dashed, dash=0.17638889cm 0.10583334cm](8.42,-3.9738207)(15.22,0.0261792)
\psline[linecolor=black, linewidth=0.04, linestyle=dashed, dash=0.17638889cm 0.10583334cm](8.42,-3.9738207)(1.62,0.0261792)
\psline[linecolor=black, linewidth=0.04, linestyle=dashed, dash=0.17638889cm 0.10583334cm](6.02,-3.9738207)(0.02,2.8261793)
\rput[bl](13.62,-1.5738208){$\mathcal{C}_{\hat{X_0}}$}
\rput[bl](12.02,-3.9738207){$\{X_n = 0\}$}
\psline[linecolor=black, linewidth=0.04, linestyle=dashed, dash=0.17638889cm 0.10583334cm](6.02,-3.9738207)(12.02,2.8261793)
\psline[linecolor=black, linewidth=0.04, linestyle=dotted, dotsep=0.10583334cm](0.02,3.6261792)(15.62,3.6261792)
\psline[linecolor=black, linewidth=0.04, linestyle=dotted, dotsep=0.10583334cm](0.02,3.2261791)(15.62,3.2261791)
\psline[linecolor=black, linewidth=0.04, linestyle=dotted, dotsep=0.10583334cm](1.22,2.8261793)(15.62,2.8261793)
\psline[linecolor=black, linewidth=0.04, linestyle=dotted, dotsep=0.10583334cm](1.62,2.4261792)(15.62,2.4261792)
\psline[linecolor=black, linewidth=0.04, linestyle=dotted, dotsep=0.10583334cm](2.02,2.0261793)(14.42,2.0261793)
\psline[linecolor=black, linewidth=0.04, linestyle=dotted, dotsep=0.10583334cm](2.42,1.6261792)(14.02,1.6261792)
\psline[linecolor=black, linewidth=0.04, linestyle=dotted, dotsep=0.10583334cm](2.82,1.2261792)(6.42,1.2261792)
\psline[linecolor=black, linewidth=0.04, linestyle=dotted, dotsep=0.10583334cm](8.42,1.2261792)(13.62,1.2261792)
\psline[linecolor=black, linewidth=0.04, linestyle=dotted, dotsep=0.10583334cm](3.22,0.8261792)(6.02,0.8261792)
\psline[linecolor=black, linewidth=0.04, linestyle=dotted, dotsep=0.10583334cm](4.02,0.4261792)(5.62,0.4261792)
\psline[linecolor=black, linewidth=0.04, linestyle=dotted, dotsep=0.10583334cm](8.82,0.8261792)(13.22,0.8261792)
\psline[linecolor=black, linewidth=0.04, linestyle=dotted, dotsep=0.10583334cm](8.82,0.4261792)(12.82,0.4261792)
\psline[linecolor=black, linewidth=0.04, linestyle=dotted, dotsep=0.10583334cm](8.82,0.0261792)(12.42,0.0261792)
\psline[linecolor=black, linewidth=0.04, linestyle=dotted, dotsep=0.10583334cm](8.82,-0.3738208)(12.02,-0.3738208)
\psline[linecolor=black, linewidth=0.04, linestyle=dotted, dotsep=0.10583334cm](8.82,-0.7738208)(10.02,-0.7738208)
\psline[linecolor=black, linewidth=0.04, linestyle=dotted, dotsep=0.10583334cm](10.82,-0.7738208)(11.62,-0.7738208)
\psline[linecolor=black, linewidth=0.04, linestyle=dotted, dotsep=0.10583334cm](8.42,-1.1738209)(9.62,-1.1738209)
\psline[linecolor=black, linewidth=0.04, linestyle=dotted, dotsep=0.10583334cm](10.82,-1.1738209)(11.62,-1.1738209)
\psline[linecolor=black, linewidth=0.04, linestyle=dotted, dotsep=0.10583334cm](8.42,-1.5738208)(9.22,-1.5738208)
\psline[linecolor=black, linewidth=0.04, linestyle=dotted, dotsep=0.10583334cm](10.82,-1.5738208)(11.22,-1.5738208)
\psline[linecolor=black, linewidth=0.04, linestyle=dotted, dotsep=0.10583334cm](10.42,-1.9738208)(10.82,-1.9738208)
\psdots[linecolor=black, dotsize=0.0125](6.02,-3.9738207)
\psdots[linecolor=black, dotsize=0.10625](6.02,-3.9738207)
\psdots[linecolor=black, dotsize=0.10625](8.42,-3.9738207)
\psdots[linecolor=black, dotsize=0.099975586](10.02,-3.9738207)
\end{pspicture}
}

\end{center}
\caption{Geometric argument for the inductive process.}
\end{figure}

\vspace{0.25cm}
\begin{center}
{\large \bf Case 2: \,\,  Lipschitz regularity in the region $B^{+}_{1/8} \setminus \{0\le u^{\varepsilon} \le \varepsilon\}$}.
\end{center}
\vspace{0.2cm}

\begin{theorem}
 Let $u^{\varepsilon}$ be a viscosity solution to  \eqref{Equation Pe}. If  $X \in B^{+}_{\frac{1}{8}} \cap \{u^{\varepsilon} > \varepsilon\}$, then there exists a constant $C_0 = C_0(n, \lambda, \Lambda, b, \mathcal{A})>0$ such that
 $$
 	|\nabla u^{\varepsilon}(X)| \le C_0.
 $$
\end{theorem}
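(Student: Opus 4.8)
The idea is to combine the interior and up-to-the-boundary $C^{1,\alpha}$ estimates for the homogeneous equation with a uniform \emph{linear growth} bound away from the $\varepsilon$-level set. Set $\Pi_\varepsilon \defeq \{0 \le u^{\varepsilon} \le \varepsilon\}$ and, for $X \in B^{+}_{1/8} \cap \{u^{\varepsilon}>\varepsilon\}$, put $d(X) \defeq \mathrm{dist}(X,\Pi_\varepsilon)$. Since $\mathrm{supp}\,\zeta \subset [0,1]$, we have $\zeta_\varepsilon(u^{\varepsilon}) \equiv 0$ on the open set $\{u^{\varepsilon}>\varepsilon\}$, so $u^{\varepsilon}$ solves there, in the viscosity sense, the homogeneous equation $F(X,\nabla u^{\varepsilon},D^2 u^{\varepsilon}) = 0$. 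When $d(X) \ge \tfrac{1}{16}$ the conclusion is immediate: in a fixed-size half-ball centered at $X$, or at $\hat X$ if $X$ lies close to $\mathcal{H}$, the function $u^{\varepsilon}$ solves the homogeneous equation with $C^{1,\gamma}$ boundary datum $\varphi$ on the flat piece, and since $\|u^{\varepsilon}\|_{L^{\infty}} \le C\mathcal{A}$ by Lemma \ref{ABP}, the interior / boundary $C^{1,\alpha}$ estimates of Caffarelli--Cabré \cite{CC} and Winter \cite{Wint} give $|\nabla u^{\varepsilon}(X)| \le C$.

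So the heart of the matter is the regime $d(X) < \tfrac{1}{16}$, and there the key ingredient is the following bound, uniform in $\varepsilon$: there is a universal constant $C_{\flat}$ with
$$
   u^{\varepsilon}(X) \le C_{\flat}\big(\varepsilon + d(X)\big)\qquad\text{whenever } X \in B^{+}_{1/8} \cap \{u^{\varepsilon}>\varepsilon\},\ d(X) < \tfrac{1}{16}.
$$
Its interior counterpart is part of the local theory of \cite{RT,Tei}; the extension of this estimate all the way up to the flat portion $B'_1$ of the boundary --- precisely where $\Pi_\varepsilon$ can meet $\partial\Omega$ and the datum $\varphi$ is only $C^{1,\gamma}$ --- is the delicate point, and I expect it to be the main obstacle. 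I would derive it from the Case 1 analysis (Theorem \ref{prop1}), which already controls $u^{\varepsilon}$ at scale $\varepsilon$ near boundary points of $\Pi_\varepsilon$, together with the boundary growth Lemma \ref{lemma2.0} and the technical Lemma \ref{lemma2.1}.

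Granting the linear growth bound, I would conclude as follows. Let $\rho \defeq \max\{\varepsilon, d(X)\}$. If $\mathrm{dist}(X,\mathcal{H}) \ge \rho$, consider the rescaling $v(Y) \defeq \rho^{-1}u^{\varepsilon}(X+\rho Y)$ on $B_1$: then $v$ solves $F_{\varepsilon}(Y,\nabla v,D^2 v) = g(Y)$ with $F_{\varepsilon}$ again of the class \textbf{(F1)}--\textbf{(F3)} (same ellipticity and oscillation constants, and with $\tilde b = \rho\,b \le b$, the contraction leaving the small-oscillation condition unaffected), where $g \equiv 0$ when $d(X)>\varepsilon$ and $g = \zeta(v)$ when $d(X)\le\varepsilon$, so in either case $\|g\|_{L^{\infty}(B_1)} \le \mathcal{B}$; moreover $0 \le v \le 3C_{\flat}$ on $B_1$ by the linear growth bound. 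The interior $C^{1,\alpha}$ estimate then yields $|\nabla v(0)| \le C$, i.e. $|\nabla u^{\varepsilon}(X)| \le C$. If instead $\mathrm{dist}(X,\mathcal{H}) < \rho$, I rescale around $\hat X$ at a scale $\tilde\rho$ chosen between $\mathrm{dist}(X,\mathcal{H})$ and $\rho$ so that either the half-ball $B^{+}_{\tilde\rho}(\hat X)$ avoids $\Pi_\varepsilon$ (when $d(X)>\varepsilon$, making the rescaled right-hand side vanish) or $\tilde\rho=\varepsilon$ is the natural singular scaling (when $d(X)\le\varepsilon$); in this boundary rescaling the flat datum becomes $\tilde\varphi(Y) = \tilde\rho^{-1}\varphi(\hat X+\tilde\rho Y)$, and the crucial observation is that, once more by the linear growth bound, $\varphi = u^{\varepsilon} \le C\tilde\rho$ on the relevant piece of $B'_1$, so that $\|\tilde\varphi\|_{C^{1,\gamma}(B'_1)} \le C(\mathcal{A},C_{\flat})$ uniformly in $\varepsilon$; the up-to-the-boundary $C^{1,\alpha}$ estimate of \cite{Wint} then closes the argument. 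The borderline case in which $\mathrm{dist}(X,\mathcal{H})$ is comparable to $d(X)$ --- so that both rescalings fail to reach $X$ --- is handled directly: the ball $B_{\mathrm{dist}(X,\mathcal{H})/2}(X)$ sits inside $B^{+}_1 \cap \{u^{\varepsilon}>\varepsilon\}$, the equation is homogeneous there, and the linear growth bound controls $u^{\varepsilon}$ by a multiple of $\mathrm{dist}(X,\mathcal{H})$, so the interior $C^{1,\alpha}$ estimate applies once more. Together with Case 1 (Theorem \ref{prop1}) this completes the uniform Lipschitz bound in $B^{+}_{1/8}$, and hence, after the standard covering/flattening argument near $\partial\Omega$ --- under which \textbf{(F1)}--\textbf{(F3)} are preserved --- of Theorem \ref{principal2}.
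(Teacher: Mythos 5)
Your proposal correctly identifies the decisive estimate (linear growth of $u^{\varepsilon}$ away from $\Pi_\varepsilon \defeq \{0\le u^{\varepsilon}\le\varepsilon\}$, uniformly in $\varepsilon$) and the concluding rescaling argument at scale $\rho=\max\{\varepsilon,d(X)\}$ with $C^{1,\alpha}$ estimates; this is in spirit exactly what the paper does, and your packaging of the case analysis into a single growth bound is tidy. But there is a genuine gap: you explicitly \emph{grant} the linear growth bound rather than prove it, and that bound is in fact the entire content of the theorem's proof in the paper. The paper establishes it in several steps: first a boundary-data growth estimate, Lemma \ref{l5}, asserting $\varphi(X)\le\varepsilon+c_0\,\delta_\varepsilon(X)$ on $B'_{1/4}\cap\{u^{\varepsilon}>\varepsilon\}$, proved by a contradiction/blow-up argument at scale $\delta_\varepsilon$ that feeds the boundary Harnack propagation of Lemma \ref{lemma2.0} against the Hopf-type bound of Lemma \ref{lemma2.1}, with a separate treatment when the nearest point of $\Pi_\varepsilon$ lies outside the cone $\mathcal{C}_{X_0}$ (the iterative cone argument from Theorem \ref{prop1}); and second, the passage from a bound on the boundary datum to a bound on $u^{\varepsilon}$ itself, done by comparison with the solution $h$ of the homogeneous Dirichlet problem on $B^+_1$ and Winter's up-to-the-boundary $C^{1,\alpha}$ estimate, which is what yields $u^{\varepsilon}(X)-\varepsilon\le C_0\,\delta(X)$ in the intermediate regime. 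Saying ``I would derive it from Theorem \ref{prop1}, Lemma \ref{lemma2.0} and Lemma \ref{lemma2.1}'' names the right tools but is not a derivation; in particular you never explain how to control the interplay between the two distances $\delta(X)=\mathrm{dist}(X,\mathcal{H})$ and $\delta_\varepsilon(X)=\mathrm{dist}(X,\Pi_\varepsilon)$, which is precisely why the paper splits into the three regimes $\delta_\varepsilon\le\delta$, $\delta<\delta_\varepsilon\le4\delta$, and $4\delta<\delta_\varepsilon$ (Lemmas \ref{prop2}, \ref{prop2.1}, \ref{prop2.2}). The concluding half of your argument is sound modulo details, but the claimed theorem is not proved without the growth bound, so the proposal is incomplete where it matters most.
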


The proof of the theorem consists in analysing three possible cases (Lemmas \ref{prop2}, \ref{prop2.1}, \ref{prop2.2} below). Henceforth we shall use the following notation
$$
	 \delta_{\varepsilon}(X) \defeq \textrm{dist}(X, \{0 \le u^{\varepsilon} \le \varepsilon\}) \quad \textrm{and} \quad\delta(X) \defeq \textrm{dist}(X, \mathcal{H}).
$$

The next result is decisive in our approach.

\begin{lemma}\label{l5}
Let $u^{\epsilon}$ be a viscosity solution to \eqref{Equation Pe} with $\varphi \in C^{1,\gamma}(B^{\prime}_{1})$. Then, for all $X \in B^{\prime}_{\frac{1}{4}} \cap \{u^{\varepsilon}> \varepsilon\}$,  there exists a constant $c_0=c_0(n, \lambda, \Lambda, b) >0$ such that
$$
	\varphi(X)\le \epsilon + c_0 \cdot  \delta_{\varepsilon}(X).
$$
\end{lemma}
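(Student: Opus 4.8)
The plan is to compare $u^\varepsilon$ from below with an explicitly constructed barrier on a ball that touches the set $\{0\le u^\varepsilon\le\varepsilon\}$, and then to exploit the $C^{1,\gamma}$ regularity of the boundary datum $\varphi$ on $B'_1$. Fix $X\in B'_{1/4}\cap\{u^\varepsilon>\varepsilon\}$ and set $d\defeq\delta_\varepsilon(X)=\mathrm{dist}(X,\{0\le u^\varepsilon\le\varepsilon\})$. Pick $X_0\in\{0\le u^\varepsilon\le\varepsilon\}$ realizing this distance, so $|X-X_0|=d$ and $u^\varepsilon\ge\varepsilon$ in $B_d(X)$; in particular $F(Y,\nabla u^\varepsilon,D^2u^\varepsilon)=\zeta_\varepsilon(u^\varepsilon)\ge 0$ in $B_d(X)$ since $u^\varepsilon>\varepsilon$ forces $\zeta(u^\varepsilon/\varepsilon)$ to be evaluated off $(0,1)$... wait, this needs care: one uses $\zeta\ge0$ and $\mathrm{supp}\,\zeta\subset[0,1]$, so $\zeta_\varepsilon(u^\varepsilon)\ge0$ always and $=0$ on $\{u^\varepsilon>\varepsilon\}$, hence $u^\varepsilon$ is a supersolution to $F(Y,\nabla\cdot,D^2\cdot)=0$ on $B_d(X)$. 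First I would handle the easy alternative: if $d\ge c$ for a fixed universal $c$, then since $\varphi(X)\le\mathcal A$ and $\varepsilon$ is small, the inequality $\varphi(X)\le\varepsilon+c_0 d$ holds trivially by choosing $c_0$ large. So the substantive case is $d$ small.

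In the small-$d$ regime, I would build a subsolution of the homogeneous equation in the annulus $B_{d}(X)\setminus \overline{B_{d/2}(X)}$ (or similar) having value roughly $\varepsilon$ on the inner sphere and growing linearly in the distance to $\partial B_{d/2}(X)$, then invoke the comparison principle against $u^\varepsilon$. Concretely, one uses a Pucci-type barrier: a function of the form $\Phi(Y)=A\big(|Y-X|^{-\sigma}-\rho^{-\sigma}\big)$ or a rescaled version thereof, which is a subsolution to $\mathcal{P}^-_{\lambda,\Lambda}(D^2\Phi)-b|\nabla\Phi|=0$ for $\sigma=\sigma(n,\lambda,\Lambda,b)$ large; by \textbf{(F1)} and \textbf{(F2)} this $\Phi$ is a viscosity subsolution to $F(Y,\nabla\cdot,D^2\cdot)=0$. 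Calibrating so that $\Phi\le\varepsilon$ on the inner sphere $\partial B_{d/2}(X)$ and $\Phi\le u^\varepsilon$ on the outer sphere (using $u^\varepsilon\ge\varepsilon\ge\Phi$ there since $\Phi\le0$ outside), comparison gives $u^\varepsilon(Y)\ge\Phi(Y)$ on the annulus, which — after choosing the amplitude $A$ to be a fixed universal multiple of the ``gap'' — yields a linear lower bound $u^\varepsilon(Y)\ge c_1\big(\mathrm{dist}(Y,\{u^\varepsilon\le\varepsilon\})\big)$ type estimate, or more directly produces a suitable lower bound at a convenient point near $\partial\Omega$. Then I would push the estimate to the hyperplane: evaluating along the segment from $X$ toward $\hat X\in B'_{1/4}$ and combining with the $C^{1,\gamma}$ (hence Lipschitz) control $|\varphi(X)-\varphi(\hat X')|\le \mathcal A|X-\hat X'|$ lets one transfer the interior information into a statement about $\varphi(X)$, giving $\varphi(X)\le\varepsilon+c_0 d$.

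The main obstacle I anticipate is the bookkeeping of the barrier's scaling against the two competing small parameters $\varepsilon$ and $d$ (recall here, unlike in Theorem \ref{prop1}, we are in $\{u^\varepsilon>\varepsilon\}$ and do \emph{not} know $d<\varepsilon$), so the barrier must be engineered to be scale-invariant — realized by passing to the rescaled function $Y\mapsto u^\varepsilon(X_0+dY)/\varepsilon$, which solves a rescaled equation $F_d(Y,\nabla\cdot,D^2\cdot)=\frac{d^2}{\varepsilon^2}\zeta(\cdot)$ with $F_d$ still satisfying \textbf{(F1)}–\textbf{(F3)} (with adjusted lower-order constant $\tilde b=d\,b\le b$), exactly as in \eqref{escale}. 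On this rescaled picture the right-hand side is non-negative, $v^\varepsilon\ge1$ on $B_1(0)$, and one must extract a \emph{linear in the distance} lower growth — this is where a Harnack/boundary-Harnack-flavoured argument or the explicit Pucci barrier is essential, and getting the constant to depend only on $n,\lambda,\Lambda,b$ (not on $\varepsilon$ or $d$) is the delicate point. Once the scale-invariant lower bound is in hand, undoing the scaling and using the Lipschitz bound on $\varphi$ closes the proof.
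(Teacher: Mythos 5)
Your plan has a genuine gap in the direction of the estimate. The barrier construction you describe — a Pucci subsolution $\Phi$ compared against $u^\varepsilon$ from below — only yields a \emph{lower} bound on $u^\varepsilon$ of the form $u^\varepsilon(Y)\gtrsim \mathrm{dist}\big(Y,\{u^\varepsilon\le\varepsilon\}\big)$. But for $X\in B'_{1/4}$ one has $u^\varepsilon(X)=\varphi(X)$, so the lemma is asking for an \emph{upper} bound on $u^\varepsilon$ at the boundary, namely $u^\varepsilon(X)\le\epsilon+c_0\,\delta_\varepsilon(X)$. A lower bound on $u^\varepsilon$ cannot be converted into this upper bound, and your closing sentence (``undoing the scaling and using the Lipschitz bound on $\varphi$ closes the proof'') does not supply the missing mechanism: the Lipschitz bound on $\varphi$ is only useful for propagating largeness of $\varphi$ tangentially along the boundary, not for bounding $\varphi$ from above. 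Moreover, your plan never uses the hypothesis that $\varphi(X)$ is large, so there is nothing to contradict.

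The paper's argument supplies precisely the upper-bound ingredient you are missing, and it does so by contradiction. Assuming $\varphi(X_0)\ge\epsilon+k\delta_\varepsilon$ for $k\gg1$, the $C^{1,\gamma}$ (hence Lipschitz) control of $\varphi$ gives $\varphi\ge\epsilon+\tfrac{k}{2}\delta_\varepsilon$ on $B'_{\delta_\varepsilon}(X_0)$, and Lemma~\ref{lemma2.0} propagates this into the interior: $u^\varepsilon\ge\epsilon+ck\delta_\varepsilon$ in $B^+_{3\delta_\varepsilon/4}(X_0)$. The opposing upper bound comes from the Hopf-type Lemma~\ref{lemma2.1} applied at the free-boundary contact point $Z\in\partial\{0\le u^\varepsilon\le\varepsilon\}$, where $u^\varepsilon(Z)=\varepsilon$ \emph{and} — crucially — $|\nabla u^\varepsilon(Z)|\le C$, the latter being the Case~1 gradient bound from Theorem~\ref{prop1}. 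Evaluating at $\mathbf{P}=Z+\tfrac{1}{4}(X_0-Z)$, which lies in $\partial B^+_{3\delta_\varepsilon/4}(X_0)\cap\partial B_{\delta_\varepsilon/4}(\mathbf{P})$, this yields $u^\varepsilon(\mathbf{P})\le\epsilon+C\delta_\varepsilon$; pairing this with the propagated lower bound gives the contradiction for $k$ large. Your proposal contains an analogue of the propagation step (via a Pucci barrier rather than the paper's reflection-plus-Harnack in Lemma~\ref{lemma2.0}, which is a fine substitute), but it omits the Hopf estimate, the use of the gradient bound on $\{0\le u^\varepsilon\le\varepsilon\}$ from Theorem~\ref{prop1}, the choice of the intermediate point $\mathbf{P}$, and the contradiction structure altogether — these are exactly what produce the upper bound. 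You also do not address the case $Z\notin\mathcal{C}_{X_0}$, which the paper handles by the same cone-iteration argument used in Theorem~\ref{prop1}.
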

\begin{proof}
 Let us suppose for sake of contradiction that there exists an $\epsilon>0$ and
 $X_0 \in B^{\prime}_{\frac{1}{4}} \setminus \{0\le u^{\varepsilon} \le \varepsilon\}$ such that
$$
  \varphi(X_0) \ge \epsilon + k \cdot \delta_{\varepsilon}(X_0)
$$
holds for $k \gg 1$, large enough. Let
$Z=Z_{\epsilon} \in \partial \{0 \le u^{\varepsilon} \le \varepsilon\}$ be a point to which the distance is achieved, i.e.
$$
    \delta_{\varepsilon} \defeq \delta_{\epsilon}(X_0) = |X_0 - Z|.
$$
We have two cases to analyse: If $Z \in \mathcal{C}_{X_0}$, then the normalized function
$v^{\epsilon} \colon B^{+}_{1} \rightarrow \mathbb{R}$ given by
$$
	v^{\epsilon}(Y) \defeq \frac{u^{\epsilon}(X_0 + \delta_{\epsilon} Y)-\varepsilon}{\delta_{\epsilon}}
$$
satisfies
$$
    F_{\varepsilon}(Y, \nabla v^{\varepsilon}, D^{2} v^{\epsilon}) = 0 \quad \mbox{in} \quad B^{+}_{1}
$$
 in the viscosity sense, where
$$
F_{\varepsilon}(Y, \overrightarrow{p}, M) \defeq \delta_{\varepsilon} F\left(X_0+\delta_{\varepsilon}Y, \overrightarrow{p}, \frac{1}{\delta_{\varepsilon}} M\right).
$$
As in Theorem \ref{prop1}, $F_{\varepsilon}$ satisfies ${\bf (F1)-(F3)}$ with constant $\tilde{b} = \delta_{\varepsilon} b$. Moreover,
$$
   v^{\epsilon}(Y) \ge 0 \quad \mbox{in} \quad B^{+}_{1}.
$$
Now, for any $X \in B^{\prime}_{\delta_{\epsilon}}(X_0)$
we should have for $k \gg 1$,
\begin{eqnarray*}
	\varphi(X) &\ge&  \varphi(X_0) - \mathcal{A} \delta_{\varepsilon} \ge  \varepsilon + k \delta_{\varepsilon} - \mathcal{A} \delta_{\varepsilon} \\
	&\ge& \varepsilon + \frac{k}{2} \delta_{\varepsilon},
\end{eqnarray*}
i.e,
$$
	\frac{\varphi(X_0 + \delta_{\varepsilon} Y) - \varepsilon}{\delta_{\varepsilon}} \ge \frac{k}{2} \quad \textrm{in} \,\,\, B^{\prime}_1.
$$
In other words,
$$
   v^{\epsilon}(Y) \ge c k \quad  \forall \,\, Y \in B'_{1}.
$$
Hence, from Lemma \ref{lemma2.0} we have that
$$
v^{\epsilon} \ge ck \quad \mbox{in} \quad B^{+}_{\frac{3}{4}}.
$$
In a more precise manner,
\begin{equation} \label{m1}
	u^{\epsilon}(X) \ge \epsilon + C k \delta_{\epsilon}, \quad X \in B^{+}_{\frac{3\delta_{\epsilon}}{4}}(X_0).
\end{equation}

From now on, let us consider $\tilde{B} \defeq B_{\frac{\delta_{\epsilon}}{4}}(\mathbf{P})$, where
$\mathbf{P} = \mathbf{P}_{\epsilon}\defeq  Z + \frac{X_0 - Z}{4}$. If we define
$\omega^{\varepsilon} \defeq u^{\epsilon}-\epsilon$, then since $Z \in \partial \tilde{B}$, it follows that
\begin{eqnarray}	
	  F_{\epsilon}(X, \nabla \omega^{\varepsilon} , D^2 \omega^{\epsilon})=0 \quad \text{ in } \quad \tilde{B}, \label{LL1}\\
	  \omega^{\epsilon}(Z)=u^{\varepsilon}(Z)-\varepsilon =0, \label{LL2}\\
	  \frac{\partial \omega^{\varepsilon}}{\partial \nu}(Z) \le |\nabla \omega^{\varepsilon}(Z)| \le C. \label{LL3}
\end{eqnarray}	
 Therefore, from \eqref{LL1}-\eqref{LL3} we can apply Lemma \ref{lemma2.1}, which gives
 $$
    \omega^{\varepsilon}(\mathbf{P}) \le C_0 \cdot \delta_{\varepsilon},
 $$
i.e.,
\begin{equation}\label{m2}
	u^{\epsilon}(\mathbf{P}) \le \epsilon + C \delta_{\epsilon}.
\end{equation}
At  a point $\mathbf{P}$ on $\partial B_{\frac{3 \delta_{\epsilon}}{4}}^{+}(X_0)$
we have (according to \eqref{m1} and \eqref{m2})
$$
	\epsilon + k  c  \delta_{\epsilon} \le u^{\epsilon}(\mathbf{P}) \le \varepsilon + C_0 \delta_{\epsilon}
$$
which gives a contradiction if $k$ has been chosen large enough.

The second case, namely $Z \not\in \mathcal{C}_{X_{0}}$, it is treated similarly as in Theorem \ref{prop1} and for this reason we omit the details here.
\end{proof}

\begin{lemma}\label{prop2}
Let $u^{\varepsilon}$ be a viscosity solution to \eqref{Equation Pe} and  $X \in B^{+}_{\frac{1}{8}} \cap \{u^{\varepsilon} > \varepsilon\}$ such that $\delta_{\varepsilon}(X) \le \delta(X)$. Then there exists a universal constant $C_0>0$, such that
$$
	|\nabla u^{\epsilon}(X)| \le C_0.
$$
\end{lemma}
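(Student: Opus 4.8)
The plan is to exploit the hypothesis $\delta_\varepsilon(X) \le \delta(X)$, which guarantees that the ball $B_{\delta_\varepsilon(X)}(X)$ does not reach the flat boundary $\mathcal H$, so that the interior estimates of \cite{RT,Tei} are available on it and no boundary data enters. First I would set $r \defeq \delta_\varepsilon(X)$ and distinguish the trivial sub-case where $r$ is comparable to a fixed universal constant (say $r \ge \frac{1}{16}$): then $X$ is uniformly far from $\{0 \le u^\varepsilon \le \varepsilon\}$, so on $B_r(X)$ we have $u^\varepsilon > \varepsilon$, hence $\zeta_\varepsilon(u^\varepsilon) \equiv 0$ there, and $u^\varepsilon$ solves the homogeneous equation $F(X,\nabla u^\varepsilon, D^2 u^\varepsilon)=0$ in a ball of fixed size; combined with the $L^\infty$ bound of Lemma \ref{ABP}, the interior gradient estimate for $F$ gives $|\nabla u^\varepsilon(X)| \le C_0$ immediately.

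The substantive sub-case is $r = \delta_\varepsilon(X) < \frac{1}{16}$ (or any fixed threshold that keeps the rescaled ball inside $B_1^+$). Here I would rescale: define
$$
v^\varepsilon(Y) \defeq \frac{u^\varepsilon(X + rY) - \varepsilon}{r}, \qquad Y \in B_1,
$$
so that $v^\varepsilon$ solves $F_\varepsilon(Y,\nabla v^\varepsilon, D^2 v^\varepsilon) = \frac{1}{r}\zeta_\varepsilon(u^\varepsilon(X+rY))\cdot r = \frac{r}{\varepsilon}\zeta\!\left(\frac{u^\varepsilon(X+rY)}{\varepsilon}\right)$, where $F_\varepsilon(Y,\overrightarrow p, M) \defeq r\,F(X+rY,\overrightarrow p, \frac1r M)$ satisfies ${\bf (F1)}$--${\bf (F3)}$ with constant $\tilde b = rb \le b$. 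By the very choice of $r = \delta_\varepsilon(X)$, the set $\{0 \le u^\varepsilon \le \varepsilon\}$ does not meet $B_r(X)$, so $u^\varepsilon > \varepsilon$ throughout $B_r(X)$, which forces $\zeta(u^\varepsilon/\varepsilon) \equiv 0$ on $B_1$ because $\zeta$ is supported in $[0,1]$. Hence $v^\varepsilon$ is a viscosity solution of the homogeneous equation $F_\varepsilon(Y,\nabla v^\varepsilon, D^2 v^\varepsilon) = 0$ in $B_1$. Moreover $v^\varepsilon \ge 0$ in $B_1$ and, using Lemma \ref{ABP} together with the Lipschitz control on $\{0\le u^\varepsilon \le \varepsilon\}$ from Case 1 (Theorem \ref{prop1}), one bounds $v^\varepsilon(0) = \frac{u^\varepsilon(X)-\varepsilon}{r}$ from above by a universal constant: indeed, picking $Z \in \partial\{0 \le u^\varepsilon\le\varepsilon\}$ realizing the distance, $u^\varepsilon(Z) \le \varepsilon$ and the gradient bound $C_1$ from Theorem \ref{prop1} valid near $Z$ gives $u^\varepsilon(X) - \varepsilon \le u^\varepsilon(X) - u^\varepsilon(Z) \le C_1|X-Z| = C_1 r$, so $v^\varepsilon(0) \le C_1$. (Here I would be a little careful about whether $Z$ and $X$ lie in a region where Theorem \ref{prop1} applies; if not, one instead uses a Harnack/barrier comparison on $B_r(X)$ directly, which is the kind of routine adjustment I would not grind through.)

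With $v^\varepsilon \ge 0$ solving the homogeneous uniformly elliptic equation in $B_1$ and $v^\varepsilon(0) \le C_1$, the Harnack inequality gives $\sup_{B_{1/2}} v^\varepsilon \le C$ universal, and then the interior $C^{1,\alpha}$ estimates for $F_\varepsilon$ (the rescaled operator still satisfies ${\bf (F1)}$--${\bf (F3)}$ with universal constants) yield $|\nabla v^\varepsilon(0)| \le C$. Unwinding the scaling, $|\nabla v^\varepsilon(0)| = |\nabla u^\varepsilon(X)|$, so $|\nabla u^\varepsilon(X)| \le C_0$ with $C_0$ depending only on $n,\lambda,\Lambda,b,\mathcal A$. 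The main obstacle is the upper bound $v^\varepsilon(0) \le C_1$: one must transfer the Lipschitz control from the ``free boundary'' layer $\{0\le u^\varepsilon\le\varepsilon\}$ (handled in Theorem \ref{prop1}) to the point $X$ without losing the $\varepsilon$-independence, and the cleanest way is precisely to invoke Theorem \ref{prop1} at the touching point $Z$ together with the elementary inequality $u^\varepsilon(X) - \varepsilon \le C_1\,\delta_\varepsilon(X)$; everything else is a standard rescaling-plus-interior-regularity argument.
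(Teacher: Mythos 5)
The overall structure matches the paper's: split on whether $\delta_\varepsilon(X)$ is large (reduce to the interior gradient bound of \cite{RT,Tei}) or small (rescale by $\delta_\varepsilon$, bound the rescaled function at the origin, then apply Harnack and interior $C^{1,\alpha}$). However, the key step where you bound $v^\varepsilon(0)$ has a genuine gap. You write that the pointwise gradient bound $|\nabla u^\varepsilon(Z)| \le C_1$ at the touching point $Z \in \partial\{0 \le u^\varepsilon \le \varepsilon\}$ gives $u^\varepsilon(X) - u^\varepsilon(Z) \le C_1|X - Z|$. This does not follow: a bound on $|\nabla u^\varepsilon|$ at the single point $Z$ gives no control on the increment of $u^\varepsilon$ along the segment from $Z$ to $X$, since that open segment lies entirely inside $\{u^\varepsilon > \varepsilon\}$, which is precisely the region where you have not yet established any gradient bound --- the argument is circular. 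Theorem~\ref{prop1} yields a pointwise estimate on $\{0 \le u^\varepsilon \le \varepsilon\}$, not a Lipschitz estimate on a full neighbourhood of $Z$.

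The step you wave off as ``a routine adjustment I would not grind through'' is in fact the heart of the proof: the paper invokes its Hopf-type boundary estimate, Lemma~\ref{lemma2.1}. After rescaling, $v^\varepsilon \ge 0$ solves $F_\varepsilon = 0$ in $B_1$, vanishes at the boundary point $Y_\varepsilon = (X_\varepsilon - X)/\delta_\varepsilon \in \partial B_1$, and satisfies $|\nabla v^\varepsilon(Y_\varepsilon)| \le C_1$ there (this is the only place where Theorem~\ref{prop1} is used). Lemma~\ref{lemma2.1} then converts ``zero value plus bounded inward normal derivative at a boundary point'' into a bound on the center value $v^\varepsilon(0) \le C$, via comparison with the exponential barrier $\mu(e^{-\delta|Y|^2} - e^{-\delta})$ on an annulus --- no global Lipschitz information is needed, which is exactly how the circularity is broken. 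So you should replace the mean-value estimate by an explicit appeal to the Hopf/barrier lemma: the mechanism you need is a \emph{propagation} of pointwise boundary information into the interior, not a path integral of the gradient.
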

\begin{proof}
We may assume with no loss of generality that $\delta_{\epsilon}(X) \le \frac{1}{8}$. Otherwise, if we suppose
that $\delta_{\epsilon}(X) > \frac{1}{8}$, then the result would follow from \cite{RT, Tei}.  From now on,
we select $X_{\epsilon} \in \partial \{0 \le u^{\varepsilon} \le \varepsilon\}$ a point which achieves the distance, i.e.,
$$
	\delta_{\epsilon} \defeq \delta_{\varepsilon}(X) =  |X-X_{\epsilon}|.
$$
Since
$$
	|X_{\epsilon}| \le |X| + \delta_{\epsilon} \le \frac{1}{4},
$$
we must have that  $X_{\varepsilon} \in B^{+}_{\frac{1}{4}} \cap \{0 \le u^{\varepsilon} \le \varepsilon\}$.
This way, by applying Theorem \ref{prop1}, there exists a constant $C_1=C(n,\lambda,\Lambda, b, \mathcal{A}, \mathcal{B})>0$ such that
$$
	|\nabla u^{\epsilon}(X_{\epsilon})| \le C_1.
$$
By defining the re-normalized  function $v^{\epsilon} : B_1 \rightarrow \mathbb{R}$ as
$$
	v^{\epsilon}(Y) \defeq \frac{u^{\epsilon}(X + \delta_{\epsilon} Y) - \epsilon}{\delta_{\epsilon}}.
$$
Then, as before $v^{\epsilon}$ satisfies
\begin{eqnarray}	
	  F_{\epsilon}(Y, \nabla v^{\varepsilon} , D^2 v^{\epsilon})=0 \quad \text{ in } \quad B_1, \label{L1}\\
	  v^{\epsilon}(Y_{\epsilon})=0, \label{L2}\\
	  |\nabla v^{\epsilon}(Y_{\epsilon})| \le C_1, \label{L3}\\
	  v^{\epsilon}(Y) \ge 0 \,\,\, \text{for}\,\,\,  Y \in B_1 \label{L4},
\end{eqnarray}	
where
$$
   F_{\epsilon}(Y,\overrightarrow{ p} , M) \defeq \delta_{\epsilon}F\left(X+ \delta_{\varepsilon} Y, \overrightarrow{p} , \frac{1}{\delta_{\epsilon}}M\right)
\quad \mbox{and} \quad  Y_{\epsilon} := \frac{X_{\epsilon} -X}{\delta_{\epsilon}} \in \partial B_1.
$$
From \eqref{L1}-\eqref{L4} we are able to apply Lemma \ref{lemma2.1}
and conclude that there exists a universal constant $c>0$ such that
$$
   v^{\varepsilon}(0) \le c.
$$
Moreover, from Harnack inequality
$$
   v^{\varepsilon} \le C_0 \quad \mbox{in} \quad B_{1/2}.
$$
Therefore, by $C^{1,\alpha}$ regularity estimates (see for example \cite{CC}) we must have that
$$
	|\nabla u^{\varepsilon}(X)| = |\nabla v^{\varepsilon}(0)| \le \frac{1}{\delta_{\varepsilon}} \|u^{\varepsilon} - \varepsilon\| \le C_0,
$$
and the Lemma is proved.
\end{proof}

\begin{lemma}\label{prop2.1}
For $X \in B^{+}_{\frac{1}{8}} \cap \{u^{\varepsilon} > \varepsilon\}$ such that $\delta(X) < \delta_{\varepsilon}(X) \le4 \delta(X)$, we have
$$
	|\nabla u^{\epsilon}(X)| \le C_0
$$
for some constant $C_0 = C_0(n,\lambda,\Lambda, b, \mathcal{A}, \mathcal{B}) >0$.
\end{lemma}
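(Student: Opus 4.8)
The plan is to reduce the pointwise gradient bound at $X$ to a scale–invariant upper estimate for $u^{\varepsilon}(X)-\varepsilon$ in terms of $\delta_{\varepsilon}(X)$, and then to establish that estimate with the same machinery already used in Theorem \ref{prop1} and Lemma \ref{prop2}: the Hopf–type control at the free boundary (Lemma \ref{lemma2.1}), the half–ball boundary estimate (Lemma \ref{lemma2.0}), the control of $\varphi$ near the free boundary (Lemma \ref{l5}), and the Harnack inequality. First I would dispose of the case $\delta_{\varepsilon}(X)\gtrsim 1$: since the hypothesis then forces $\delta(X)\ge\frac14\delta_{\varepsilon}(X)\gtrsim 1$ as well, a ball of universal radius about $X$ lies in $\Omega\setminus\{0\le u^{\varepsilon}\le\varepsilon\}$, where $u^{\varepsilon}$ solves the homogeneous equation, and $|\nabla u^{\varepsilon}(X)|\le C_0$ follows from the interior estimates of \cite{RT,Tei}. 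So assume $\rho\defeq\delta(X)$ and $d\defeq\delta_{\varepsilon}(X)$ are small, with $\rho<d\le 4\rho$. I claim it is enough to prove $u^{\varepsilon}(X)\le\varepsilon+C\,\delta_{\varepsilon}(X)$ for a universal $C$. Indeed, since $\rho<d$ the ball $B_{\rho}(X)\subset\Omega$ avoids $\{0\le u^{\varepsilon}\le\varepsilon\}$, so $w\defeq u^{\varepsilon}-\varepsilon\ge 0$ solves $F(\cdot,\nabla w,D^2w)=0$ there; the Harnack inequality gives $\sup_{B_{\rho/2}(X)}w\le C\,w(X)\le C'\rho$, and the interior $C^{1,\alpha}$ estimates (e.g. \cite{CC,Wint}) then yield $|\nabla u^{\varepsilon}(X)|\le \tfrac{C}{\rho}\|w\|_{L^{\infty}(B_{\rho/2}(X))}\le C_0$.

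To prove $u^{\varepsilon}(X)\le\varepsilon+Cd$, I would select $X_{\varepsilon}\in\partial\{0\le u^{\varepsilon}\le\varepsilon\}$ with $|X-X_{\varepsilon}|=d$ — so that $u^{\varepsilon}(X_{\varepsilon})=\varepsilon$ and, by Theorem \ref{prop1} applied at $X_{\varepsilon}\in B^{+}_{1/2}\cap\{0\le u^{\varepsilon}\le\varepsilon\}$, $|\nabla u^{\varepsilon}(X_{\varepsilon})|\le C_1$ — and then rescale
$$
 v^{\varepsilon}(Y)\defeq \frac{u^{\varepsilon}(X+dY)-\varepsilon}{d},\qquad a\defeq\frac{\rho}{d}\in[\tfrac14,1).
$$
Since $d=\dist(X,\{0\le u^{\varepsilon}\le\varepsilon\})$ and $u^{\varepsilon}(X)>\varepsilon$, one has $u^{\varepsilon}>\varepsilon$ on $B_{d}(X)\cap\Omega$, so $v^{\varepsilon}\ge 0$ and $v^{\varepsilon}$ solves a rescaled homogeneous equation $F_{\varepsilon}(Y,\nabla v^{\varepsilon},D^{2}v^{\varepsilon})=0$ (with $F_{\varepsilon}$ satisfying ${\bf (F1)-(F3)}$ with $\tilde b=db$) in $B_{1}\cap\{Y_{n}>-a\}$; moreover $v^{\varepsilon}(Y_{\varepsilon})=0$ at $Y_{\varepsilon}\defeq(X_{\varepsilon}-X)/d\in\partial B_{1}$, $|\nabla v^{\varepsilon}(Y_{\varepsilon})|\le C_1$, and on the flat portion $\{Y_{n}=-a\}\cap B_{1}$ the points $X+dY$ lie in $\mathcal{H}$ with $|X+dY-X_{\varepsilon}|\le 2d$, so that Lemma \ref{l5} bounds $v^{\varepsilon}$ there by a universal constant. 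Thus the task reduces to showing $v^{\varepsilon}(0)\le C$.

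The main obstacle is precisely that $a<1$: the ball $B_{1}$ in the rescaled picture protrudes below the flattened boundary $\{Y_{n}=-a\}$, so Lemma \ref{lemma2.1} cannot be invoked verbatim as in Lemma \ref{prop2}, and one must keep track of the flat boundary and of the free boundary at the same time. I would handle this exactly as in the proof of Theorem \ref{prop1}, splitting according to whether $X_{\varepsilon}$ lies in a cone with vertex at $\hat{X}$ (equivalently, whether $X_{\varepsilon}$ is bounded away from $\mathcal{H}$ on the scale $d$). In the cone case a ball $B_{r}(\mathbf{P})$ of universal radius tangent to the rescaled free boundary at $Y_{\varepsilon}$ stays inside $B_{1}\cap\{Y_{n}>-a\}$, so Lemma \ref{lemma2.1} bounds $v^{\varepsilon}$ on it, and a Harnack chain of universally bounded length inside $\{v^{\varepsilon}>0\}\cap\{Y_{n}>-a\}$ — available because along the relevant segment $v^{\varepsilon}$ stays positive and the distance to $\{Y_{n}=-a\}$ is controlled by $a\ge\tfrac14$ — transports the estimate to $v^{\varepsilon}(0)$. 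In the complementary case I would argue by contradiction as in Theorem \ref{prop1} and Lemma \ref{l5}: assuming $v^{\varepsilon}(0)\ge k\gg 1$, Lemma \ref{lemma2.0} forces $v^{\varepsilon}\gtrsim k$ on a definite portion of $\{Y_{n}>-a\}$ adjacent to the flat boundary, hence $\varphi\ge\varepsilon+(k-C)d$ at the corresponding points of $\mathcal{H}$; iterating the distance–to–free–boundary construction (which terminates after finitely many steps, the radii decaying geometrically) one produces a point of $\mathcal{H}$ near $\hat{X}$ at which the bound $\|\varphi\|_{C^{1,\gamma}(\overline{\Omega})}\le\mathcal{A}$ is violated for $k$ large. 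Once $v^{\varepsilon}(0)\le C$, i.e. $u^{\varepsilon}(X)\le\varepsilon+C\delta_{\varepsilon}(X)$, the reduction step above closes the proof, with $C_0=C_0(n,\lambda,\Lambda,b,\mathcal{A},\mathcal{B})$.
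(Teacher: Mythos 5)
Your reduction step and your target estimate are the same as the paper's: everything hinges on proving $u^{\varepsilon}(X)\le\varepsilon+C\,\delta(X)$ (equivalently $\varepsilon+C\,\delta_{\varepsilon}(X)$, since $\delta<\delta_{\varepsilon}\le4\delta$), after which a rescaling plus Harnack and interior $C^{1,\alpha}$ give the gradient bound. Where you diverge is in how that height estimate is obtained, and here your route is substantially heavier than the paper's. The paper introduces the auxiliary function $h$ solving the homogeneous Dirichlet problem in $B_{1}^{+}$ with data $u^{\varepsilon}$, invokes Winter's boundary $C^{1,\alpha}$ estimate to get $u^{\varepsilon}(X)\le h(X)\le h(\hat{X})+\mathcal{C}^{\ast}\delta=\varphi(\hat{X})+\mathcal{C}^{\ast}\delta$, and then applies Lemma~\ref{l5} at $\hat{X}$ together with $\mathrm{dist}(\hat{X},\{0\le u^{\varepsilon}\le\varepsilon\})\le\delta+\delta_{\varepsilon}\le 5\delta$. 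No rescaling, no cone dichotomy, no Hopf lemma, no Harnack chain. Your version instead rescales by $\delta_{\varepsilon}$ and rebuilds the full Theorem~\ref{prop1} machinery in the truncated ball $B_{1}\cap\{Y_{n}>-a\}$: Lemma~\ref{lemma2.1} on a universal tangent ball at $Y_{\varepsilon}$, a Harnack chain to $0$, and a contradiction argument in the non-cone case. That can work, but it re-derives from scratch information the barrier $h$ already packages. The one place your sketch reads backwards is the non-cone case: Lemma~\ref{lemma2.0} propagates a \emph{lower bound on the flat boundary} into the interior; it cannot take $v^{\varepsilon}(0)\ge k$ and ``force $v^{\varepsilon}\gtrsim k$ on a definite portion of $\{Y_n>-a\}$ adjacent to the flat boundary.'' To run the Theorem~\ref{prop1}-type contradiction you first need a large lower bound \emph{on} the flat boundary (there it comes from the contradiction hypothesis $u^{\varepsilon}(\hat{X})\ge k\varepsilon$ plus the Lipschitz bound on $\varphi$); with only $v^{\varepsilon}(0)\ge k$ you would have to produce such a boundary lower bound by some other means before Lemma~\ref{lemma2.0} is applicable. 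So your cone case is sound, your non-cone case needs repair, and overall the paper's barrier argument is the more economical path to the same inequality.
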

\begin{proof}
Similar to Lemma \ref{prop2}, we may assume that $\delta_{\epsilon} \le \frac{1}{8}$,
otherwise, as in Lemma \ref{prop2} the gradient boundedness follows from local estimates \cite{RT, Tei}.
Define the scaled function $v^{\epsilon} \colon B_1 \rightarrow \mathbb{R}$ by
$$
	v^{\epsilon}(Y) \defeq \frac{u^{\epsilon}(X+\delta Y)-\epsilon}{\delta},
$$
where $\delta = \delta(X)$. Clearly
$$
   F_{\delta}(Y, \nabla v^{\varepsilon}, D^2 v^{\epsilon})=0 \quad \mbox{in} \quad B_1
$$
in the viscosity sense, and, from Harnack inequality
$$
	v^{\varepsilon} \le C v^{\varepsilon}(0) \sim \frac{1}{\delta} \quad \textrm{in} \,\,\, B_{\frac{1}{2}}.
$$
By applying once more $C^{1,\alpha}$ regularity estimates, we obtain
\begin{equation}\label{inf}
	|\nabla u^{\varepsilon}(X)| = |\nabla v^{\varepsilon}(0)| \le \frac{C}{\delta}.
\end{equation}

Therefore, the idea is to find an estimate for $u^{\varepsilon}-\varepsilon$ in terms of the vertical
distance $\delta(X).$ To this end, consider $h$ the viscosity solution to the
Dirichlet problem
\begin{equation} \label{(14)}
\left\{
\begin{array}{rclcl}
  F(X,\nabla h,D^2 h)& = & 0 & \mbox{in} & B^{+}_{1}\\
  h & = & u^{\epsilon} & \mbox{on} &  \partial B^{+}_{1} .
\end{array}
\right.
\end{equation}
Since $0 \le u^{\epsilon} \le C(n, \lambda, \Lambda, b, \mathcal{B})$, it follows from $C^{1,\alpha}$ estimate up to boundary \cite{Wint} that $h \in C^{1,\alpha}\left(\overline{B}^{+}_{\frac{3}{4}}\right)$. Moreover
$$
	|\nabla h(X)| \le \overline{C} \left(\|h\|_{L^{\infty}(B^{+}_1)} + \|\varphi\|_{C^{1,\gamma}( B^{\prime}_{1})}\right) \le \overline{C}(C+\mathcal{A})\defeq \mathcal{C}^{\ast}.
$$
From Comparison Principle, we have that
$$
   u^{\epsilon} \le h \quad \mbox{in} \quad B^{+}_{1}.
$$
Hence,
\begin{equation}\label{(15)}
	u^{\epsilon}(X) \le h(X) \le h(\hat{X}) + \mathcal{C}^{\ast}|X-\hat{X}| \le \varphi(\hat{X}) + \mathcal{C}^{\ast}\delta.
\end{equation}
Now, we have  that $|\hat{X}| \le |X| + \delta \le \frac{1}{4}$ , and, consequently we are able to apply Lemma  \ref{l5} which gives
\begin{equation}\label{(16)}
	\varphi(\hat{X}) \le \epsilon + c_0 \cdot \textrm{dist}(\hat{X}, \{0 \le u^{\varepsilon} \le \varepsilon\}) \le \epsilon + c_0(\delta_{\epsilon} + \delta) \le \epsilon + 5 c_0 \delta.
\end{equation}
Thus, it follows from \eqref{(15)} and \eqref{(16)} that
$$
   u^{\epsilon}(X) - \varepsilon \le C_0 \delta,
$$
where $C_0 \defeq C(5c_0+\mathcal{C}^{\ast})$. Finally, if we apply $C^{1,\alpha}$ estimate, Harnack inequality and estimate \eqref{inf},
respectively, we end up with
$$
	|\nabla u^{\varepsilon}(X)| = |\nabla v^{\varepsilon}(0)| \le \frac{1}{\delta} \|u^{\varepsilon} - \varepsilon\|_{L^{\infty}\left(B_{\frac{1}{2}}\right)} \le C_0
$$
which concludes the proof.
\end{proof}

\begin{lemma}\label{prop2.2}
If $X \in B^{+}_{\frac{1}{8}} \cap \{u^{\varepsilon} > \varepsilon\}$ and $4\delta(X) < \delta_{\varepsilon}(X) $, then there exists a constant $C_0 = C_0(n, \lambda,\Lambda, b, \mathcal{A}, \mathcal{B}) >0$ such that
$$
	|\nabla u^{\epsilon}(X)| \le C_0.
$$
\end{lemma}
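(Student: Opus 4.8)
The plan is to work at the scale $\rho\defeq\tfrac12\delta_{\varepsilon}(X)$, centred at the vertical projection $\hat X$ of $X$ onto $\mathcal{H}$; this is the scale on which $u^{\varepsilon}$ is $F$-harmonic while remaining comparable to $\varepsilon$. First I would dispose of the regime $\delta_{\varepsilon}(X)\ge\tfrac18$: if also $\delta(X)\ge\tfrac1{32}$, then $B_{1/64}(X)$ is contained in $B^{+}_{1}\cap\{u^{\varepsilon}>\varepsilon\}$, so $u^{\varepsilon}$ is $F$-harmonic there and interior $C^{1,\alpha}$ estimates (see \cite{CC}) together with $0\le u^{\varepsilon}\le C$ from Lemma \ref{ABP} give $|\nabla u^{\varepsilon}(X)|\le C$; otherwise $\delta_{\varepsilon}(\hat X)\ge\delta_{\varepsilon}(X)-\delta(X)>\tfrac3{32}$, so $u^{\varepsilon}$ is $F$-harmonic in $B^{+}_{3/32}(\hat X)$ with $C^{1,\gamma}$ datum $\varphi$ on the flat part, and the global $C^{1,\alpha}$ estimate up to the boundary \cite[Theorem 3.1]{Wint} applies at $X\in B^{+}_{3/64}(\hat X)$. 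I may therefore assume $\delta_{\varepsilon}(X)<\tfrac18$, which yields $\rho<\tfrac1{16}$, $\delta(X)<\tfrac14\delta_{\varepsilon}(X)=\tfrac12\rho$, $B^{+}_{\rho}(\hat X)\subset B^{+}_{3/4}$, and $\delta_{\varepsilon}(\hat X)\ge\tfrac34\delta_{\varepsilon}(X)=\tfrac32\rho>\rho$; the last inequality and $u^{\varepsilon}\ge0$ force $u^{\varepsilon}>\varepsilon$ on $B^{+}_{\rho}(\hat X)$, so $\zeta_{\varepsilon}(u^{\varepsilon})\equiv0$ and $u^{\varepsilon}$ solves $F(Y,\nabla u^{\varepsilon},D^{2}u^{\varepsilon})=0$ there.

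The crucial step will be the bound
$$u^{\varepsilon}(Y)\le\varepsilon+C\rho\qquad\text{for all }Y\in B^{+}_{\rho}(\hat X),$$
$C$ universal. I would obtain it by reusing the global harmonic comparison of Lemma \ref{prop2.1}: let $h$ solve $F(Y,\nabla h,D^{2}h)=0$ in $B^{+}_{1}$ with $h=u^{\varepsilon}$ on $\partial B^{+}_{1}$. Since $h=\varphi$ on $B'_{1}$ and $0\le u^{\varepsilon}\le C$, the estimate \cite[Theorem 3.1]{Wint} gives $h\in C^{1,\alpha}\bigl(\overline{B}^{+}_{3/4}\bigr)$ with $\|\nabla h\|_{L^{\infty}(B^{+}_{3/4})}\le\mathcal{C}^{\ast}$, $\mathcal{C}^{\ast}$ universal, and, $u^{\varepsilon}$ being a subsolution of $F=0$ because $F(Y,\nabla u^{\varepsilon},D^{2}u^{\varepsilon})=\zeta_{\varepsilon}(u^{\varepsilon})\ge0$, the Comparison Principle yields $u^{\varepsilon}\le h$ in $B^{+}_{1}$. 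For $Y\in B^{+}_{\rho}(\hat X)$ with vertical projection $\hat Y$ (so $\delta(Y)\le\rho$ and $|\hat Y|<\tfrac14$) this gives
$$u^{\varepsilon}(Y)\le h(Y)\le h(\hat Y)+\mathcal{C}^{\ast}\delta(Y)=\varphi(\hat Y)+\mathcal{C}^{\ast}\delta(Y),$$
and, since $\delta_{\varepsilon}(\hat Y)\le\delta_{\varepsilon}(X)+\delta(X)+|\hat Y-\hat X|<\tfrac72\rho$, Lemma \ref{l5} at $\hat Y$ (the inequality being trivial when $\varphi(\hat Y)\le\varepsilon$) gives $\varphi(\hat Y)\le\varepsilon+c_{0}\delta_{\varepsilon}(\hat Y)\le\varepsilon+\tfrac72c_{0}\rho$; the claim follows with $C=\tfrac72c_{0}+\mathcal{C}^{\ast}$.

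With this in hand I would rescale: set $\hat u(Z)\defeq\rho^{-1}\bigl(u^{\varepsilon}(\hat X+\rho Z)-\varepsilon\bigr)$ for $Z\in B^{+}_{1}$, which solves $\hat F(Z,\nabla\hat u,D^{2}\hat u)=0$ in $B^{+}_{1}$, with $\hat F(Z,\overrightarrow{p},M)\defeq\rho F(\hat X+\rho Z,\overrightarrow{p},\rho^{-1}M)$ satisfying ${\bf (F1)-(F3)}$ with $\tilde b=\rho b\le b$; moreover $0\le\hat u\le C$ in $B^{+}_{1}$ by the previous step, and on $B'_{1}$ the boundary datum $Z\mapsto\rho^{-1}(\varphi(\hat X+\rho Z)-\varepsilon)$ has $C^{1,\gamma}(B'_{1})$-norm at most $C+2\mathcal{A}$ (its $C^{0}$-part is the sup bound just proved, its gradient is $\nabla\varphi$, and its $C^{0,\gamma}$-seminorm equals $\rho^{\gamma}[\nabla\varphi]_{C^{0,\gamma}}\le\mathcal{A}$). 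Then \cite[Theorem 3.1]{Wint} yields $\|\nabla\hat u\|_{L^{\infty}(B^{+}_{1/2})}\le C_{0}=C_{0}(n,\lambda,\Lambda,b,\mathcal{A},\mathcal{B})$, and since $\nabla\hat u(Z)=\nabla u^{\varepsilon}(\hat X+\rho Z)$ and $X=\hat X+\rho Z^{\ast}$ with $Z^{\ast}=(0,\dots,0,\delta(X)/\rho)$, $|Z^{\ast}|=\delta(X)/\rho<\tfrac12$ --- here the hypothesis $4\delta(X)<\delta_{\varepsilon}(X)$ is used --- I conclude $|\nabla u^{\varepsilon}(X)|=|\nabla\hat u(Z^{\ast})|\le C_{0}$. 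I expect the main obstacle to be exactly the sup bound $u^{\varepsilon}\le\varepsilon+C\rho$ on the whole half-ball $B^{+}_{\rho}(\hat X)$: a direct maximum principle only gives $u^{\varepsilon}\le\varepsilon+C$ there, which is worthless after dividing by $\rho$, and it is the interplay between the harmonic replacement $h$, which transports interior bounds down to $\mathcal{H}$, and Lemma \ref{l5}, which controls $\varphi$ on $\mathcal{H}$ by the distance to the coincidence set, that makes $\hat u$ uniformly bounded and allows a rescaled boundary $C^{1,\alpha}$ estimate to close the argument.
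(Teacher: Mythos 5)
Your proof is correct and follows essentially the same strategy as the paper's: a harmonic replacement $h\ge u^\varepsilon$ together with the boundary $C^{1,\gamma}$ estimate of \cite{Wint} and Lemma \ref{l5} yield the key sup bound $u^\varepsilon\le\varepsilon+C\delta_\varepsilon$ on a half-ball of comparable size around $\hat X$, after which a rescaled boundary $C^{1,\alpha}$ estimate gives the gradient bound, with the regime $\delta_\varepsilon\ge\tfrac18$ dispatched separately. Your handling of that latter regime (splitting into $\delta(X)\ge\tfrac1{32}$ and $\delta(X)<\tfrac1{32}$) is in fact a bit more careful than the paper's, which fixes a half-ball $B^{+}_{1/16}(\hat X)$ without noting that $X$ itself may lie outside it when $\delta(X)>\tfrac1{16}$.
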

\begin{proof}
Initially we will consider the case when $\delta_{\epsilon} \le \frac{1}{8}$. The following inclusion holds true:
$B^{+}_{\frac{\delta_{\epsilon}}{2}}(\hat{X}) \subset B^{+}_{\frac{1}{4}} \setminus \{0 \le u^{\varepsilon} \le \varepsilon\}$.
In fact, if $Y \in B^{+}_{\frac{\delta_{\varepsilon}}{2}}(\hat{X})$ then
$$
	|Y| \le |Y-X| + |X| \le 2 \frac{\delta_{\epsilon}}{2} + |X| \le \frac{1}{4}.
$$
Now, using the same argument as in Lemma \ref{prop2.1} (see \eqref{(14)}) we are able to estimate
$u^{\epsilon}$ in $B^{+}_{\frac{\delta_{\epsilon}}{2}}(\hat{X})$ as follows
$$
	u^{\epsilon}(Y) \le u^{\epsilon}(\hat{Y}) + \mathcal{C}^{\ast} \frac{\delta_{\epsilon}}{2} \le \epsilon + c_0 \cdot \textrm{dist}(\hat{Y}, \{0 \le u^{\varepsilon} \le \varepsilon\}) + \mathcal{C}^{\ast} \frac{\delta_{\epsilon}}{2}.
$$
Since the distance function is Lipschitz continuous with Lipschitz constant $1$, we have
$$
	\textrm{dist}(\hat{Y}, \{0 \le u^{\varepsilon} \le \varepsilon\}) \le \delta_{\epsilon} + |\hat{Y}-X| \le 2\delta_{\epsilon}.
$$
Therefore,
$$
	u^{\epsilon}(Y) \le \epsilon + \left(2c_0 + \frac{\mathcal{C}^{\ast}}{2}\right) \delta_{\epsilon} = \epsilon + c \delta_{\epsilon}.
$$
By considering the function $v^{\epsilon}(Y) = u^{\epsilon}(Y) - \epsilon$ in $B^{+}_{\frac{\delta_{\epsilon}}{2}}(\hat{X})$,
we have that
$$
	F(Y, \nabla v^{\varepsilon}, D^2 v^{\epsilon}) =0 \quad \textrm{in} \quad B^{+}_{\frac{\delta_{\epsilon}}{2}}(\hat{X})
$$
in the viscosity sense. From $C^{1,\alpha}$ estimate up to boundary and Lemma \ref{ABP}, we have
$$
	|\nabla u^{\epsilon}(X)| = |\nabla v^{\epsilon}(X)| \le C (c+\mathcal{A}).
$$

On the other hand, for the case $\delta_{\epsilon} \ge \frac{1}{8}$  we have the following inclusion
$B^{+}_{\frac{1}{16}}(\hat{X}) \subset B_1 \setminus \{0 \leq u^{\varepsilon} \le \varepsilon\}$. In this situation,
since $\textrm{supp}(\zeta_{\epsilon})=[0,\epsilon]$,
$$
\left\{
\begin{array}{rcl}
 F(X, \nabla u^{\varepsilon}, D^2 u^{\epsilon}) = 0 & \mbox{in} & B^{+}_{\frac{1}{16}}(\hat{X})\\
0 \le u^{\epsilon}  = \varphi \le C & \mbox{on} & B^{\prime}_{\frac{1}{16}}(\hat{X})
\end{array}
\right.
$$
and, consequently, the estimate will follow from $C^{1,\alpha}$ estimates up to the boundary.
\end{proof}

%
%
An immediate consequence of Theorem \ref{principal2} and Lemma \ref{ABP}is the existence of solutions via compactness in the Lip-Topology for any family $(u^{\varepsilon})_{\varepsilon >0}$ of viscosity solutions to singular perturbation problem \eqref{Equation Pe}. We consequently obtain

\begin{theorem}[{\bf Limiting free boundary problem}]\label{limFB}
Let $(u^{\varepsilon})_{\varepsilon >0}$ be a family of solutions to \eqref{Equation Pe}.
For every $\varepsilon_{k} \to 0^{+}$ there exist a subsequence $\varepsilon_{k_j} \to 0^{+}$ and $u_0 \in C^{0, 1}(\overline{\Omega})$ such that
\begin{enumerate}
\item[{\bf (1)}] $u^{\varepsilon_{k_j}} \to u_0$ uniformly in $\overline{\Omega}$.
\item[{\bf (2)}] $F(X,\nabla u_0, D^2 u_0)=0$ in $\overline{\Omega} \cap \{u_0>0\}$ in the viscosity sense.
\end{enumerate}
\end{theorem}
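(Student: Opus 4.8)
The plan is to obtain the limiting free boundary problem as a straightforward consequence of the uniform global Lipschitz estimate from Theorem \ref{principal2}, combined with uniform boundedness from Lemma \ref{ABP} and stability of viscosity solutions under locally uniform convergence. First I would invoke Theorem \ref{principal2} to get $\|\nabla u^{\varepsilon}\|_{L^\infty(\overline\Omega)}\le C$ with $C$ independent of $\varepsilon$, and Lemma \ref{ABP} to get $0\le u^\varepsilon\le C\|\varphi\|_{L^\infty(\overline\Omega)}$; together these show that the family $(u^{\varepsilon})_{\varepsilon>0}$ is uniformly bounded and equi-Lipschitz on $\overline\Omega$. Since $\Omega$ is a bounded $C^{1,1}$ domain, $\overline\Omega$ is compact, so the Arzelà--Ascoli theorem yields, for every sequence $\varepsilon_k\to 0^+$, a subsequence $\varepsilon_{k_j}\to 0^+$ and a function $u_0\in C^{0,1}(\overline\Omega)$ with $u^{\varepsilon_{k_j}}\to u_0$ uniformly on $\overline\Omega$; the Lipschitz bound passes to the limit, giving claim \textbf{(1)}. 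Also $u_0\ge 0$ and $u_0=\varphi$ on $\partial\Omega$, though the statement only records the convergence.

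For claim \textbf{(2)} I would argue locally. Fix $X_0\in\overline\Omega\cap\{u_0>0\}$; by continuity there is a small ball (or half-ball, after the boundary-flattening change of variables described at the start of Section \ref{reglocal}) $B=B_\rho(X_0)\cap\overline\Omega$ on which $u_0\ge \tau>0$. By uniform convergence, for $j$ large we have $u^{\varepsilon_{k_j}}\ge \tau/2$ on $B$, hence $u^{\varepsilon_{k_j}}>\varepsilon_{k_j}$ on $B$ once $\varepsilon_{k_j}<\tau/2$. Since $\zeta$ is supported in $[0,1]$, the reaction term $\zeta_{\varepsilon_{k_j}}(u^{\varepsilon_{k_j}})=\tfrac{1}{\varepsilon_{k_j}}\zeta(u^{\varepsilon_{k_j}}/\varepsilon_{k_j})$ vanishes identically on $B$, so $u^{\varepsilon_{k_j}}$ solves $F(X,\nabla u^{\varepsilon_{k_j}},D^2 u^{\varepsilon_{k_j}})=0$ in $B$ in the viscosity sense. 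Now I invoke the standard stability theorem for viscosity solutions of fully nonlinear uniformly elliptic equations (Caffarelli--Cabré \cite{CC}): under assumptions \textbf{(F1)--(F3)}, a locally uniform limit of viscosity solutions of $F(X,\nabla w,D^2 w)=0$ is again a viscosity solution of the same equation. This gives $F(X,\nabla u_0,D^2 u_0)=0$ in $B$, and since $X_0$ was arbitrary in $\overline\Omega\cap\{u_0>0\}$, claim \textbf{(2)} follows.

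The main subtlety, and the only genuine obstacle, is making the stability step rigorous up to the boundary: the change of variables flattening $\partial\Omega$ only affects $F$ by a change of coordinates that preserves uniform ellipticity (with possibly modified constants) and the structural conditions \textbf{(F1)--(F3)}, so the stability principle applies on half-balls $B^+_\rho$ just as in the interior, using that $u^{\varepsilon_{k_j}}$ converges uniformly up to the flat boundary piece and that the Dirichlet data $\varphi$ are fixed. One should note that here the equation $F=0$ is genuinely satisfied on the relatively open set $\overline\Omega\cap\{u_0>0\}$ in the subset-of-$\overline\Omega$ sense, meaning that at boundary points $X_0\in\partial\Omega$ with $\varphi(X_0)>0$ the equation holds in the natural one-sided viscosity sense. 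Everything else — Arzelà--Ascoli, the eventual vanishing of $\zeta_{\varepsilon_{k_j}}(u^{\varepsilon_{k_j}})$ where $u_0>0$, and the diagonal extraction of the subsequence — is routine.
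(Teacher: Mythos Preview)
Your proposal is correct and follows exactly the approach indicated in the paper: the paper states Theorem \ref{limFB} as ``an immediate consequence of Theorem \ref{principal2} and Lemma \ref{ABP}'' via compactness in the Lipschitz topology, without giving further details. You have simply filled in those details (Arzel\`a--Ascoli for compactness, the vanishing of $\zeta_{\varepsilon}(u^{\varepsilon})$ on $\{u_0>0\}$ for large $j$, and stability of viscosity solutions under uniform limits), which is precisely what the paper intends.
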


\section{Appendix}\label{Append}

\hspace{0.6cm}In this final section we are going to give the proof of some technical results,
which were temporarily omitted.

\begin{lemma}[{\bf Boundary's estimates propagation Lemma}]\label{lemma2.0}
Suppose that $u \ge 0$ is a viscosity solution to
$$
\left\{
\begin{array}{rcl}
	F(X, \nabla u, D^2 u) = 0 & \mbox{in} & B^{+}_{1}\\
	u \ge \sigma >0 & \mbox{on} & B^{\prime}_{1}.
\end{array}
\right.
$$
 Then there exists a universal constant $C= C(n,\lambda,\Lambda, b) >0$ such that
$$
	u(X) \ge C \sigma, \quad X \in B^{+}_{\frac{3}{4}}.
$$
\end{lemma}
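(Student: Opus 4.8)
The plan is to prove a quantitative boundary Harnack-type estimate by constructing an explicit barrier from below. Since $u \ge 0$ solves $F(X,\nabla u, D^2 u) = 0$ in $B^+_1$ and $u \ge \sigma > 0$ on the flat part $B'_1$, I want to produce a universal subsolution $w$ (for the Pucci minimal operator $\mathcal{P}^-_{\lambda,\Lambda}(D^2 w) - b|\nabla w|$, which by assumption \textbf{(F1)} and \textbf{(F2)} lies below $F(X,\nabla \cdot, D^2 \cdot)$ in the comparison sense) with $w \le \sigma$ on $B'_1$, $w \le 0$ on the spherical part $\partial B_1 \cap \{X_n > 0\}$, and $w \ge c\sigma$ on $B^+_{3/4}$. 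Then the comparison principle for viscosity solutions forces $u \ge w$ in $B^+_1$, and evaluating on $B^+_{3/4}$ finishes the proof.

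First I would normalize: by linearity of the Pucci operator in $\sigma$, it suffices to take $\sigma = 1$ and build one fixed barrier $w$. A natural candidate is of the form $w(X) = X_n \cdot \psi(|X|)$ or, more robustly, a combination like $w(X) = \eta \cdot X_n - A|X'|^2 X_n - \ldots$ tuned so that the positive contribution of the $X_n$ term near the hyperplane dominates; alternatively one uses the standard construction $w(X) = C_1(1 - e^{-C_2 X_n}) - C_3(e^{C_4|X'|^2}-1)$ type barriers, or simply solves an auxiliary Dirichlet problem. Perhaps the cleanest route: let $h$ be the $\mathcal{P}^-$-harmonic (or $F$-harmonic) function in $B^+_1$ with boundary data $h = 0$ on $\partial B_1 \cap \{X_n>0\}$ and $h = 1$ on $B'_{3/4}$, interpolating on $B'_1 \setminus B'_{3/4}$; then $u \ge h$ by comparison (since $u \ge 0 \ge h$ on the cap and $u \ge 1 \ge h$ on the base once $h \le 1$), and $h \ge c > 0$ on $B^+_{3/4}$ by the strong maximum principle / Harnack inequality applied to the nonnegative, non-identically-zero $F$-supersolution $h$, with $c$ depending only on $n,\lambda,\Lambda,b$. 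This last positivity is exactly the boundary Harnack inequality for uniformly elliptic operators and is where the universal constant comes from.

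The step I expect to be the main obstacle is making the lower bound on $B^+_{3/4}$ genuinely \emph{universal} and independent of the solution $u$: one must ensure the barrier/auxiliary function $h$ is fixed once and for all, and that the Harnack-type propagation from the boundary values on $B'_{3/4}$ into the interior region $B^+_{3/4}$ is controlled purely by the ellipticity constants and $b$. Concretely, one covers $B^+_{3/4}$ by a chain of balls contained in $B^+_1$ (touching down toward $B'_1$) and applies the interior Harnack inequality of Caffarelli--Cabré \cite{CC} along the chain, starting from a point where $h$ is bounded below by a definite constant near the flat boundary (which itself follows from a one-dimensional or radial barrier estimate on $X_n$). Care is needed near the corner $\partial B'_1$ where the flat and spherical parts of $\partial B^+_1$ meet, but since we only claim the estimate in $B^+_{3/4}$, that corner region can be avoided entirely in the chain of Harnack balls. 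The rest — verifying $F_\varepsilon$-type structural hypotheses are not even needed here since $F$ itself satisfies \textbf{(F1)}--\textbf{(F2)}, and assembling the comparison — is routine.
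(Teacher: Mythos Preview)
Your overall strategy---compare $u$ with an auxiliary $F$-harmonic function $w$ having boundary data $0$ on the spherical cap and $\sigma$ on the flat base, then show $w$ has a universal lower bound on $B^+_{3/4}$---is exactly the paper's. Where you diverge is in establishing that lower bound. You propose a Harnack-chain argument supplemented by a barrier estimate near the flat boundary; the paper instead performs an odd-type reflection, setting $\mathfrak{U}(X) = w(X)$ for $X_n \ge 0$ and $\mathfrak{U}(X) = 2\sigma - w(X_1,\ldots,X_{n-1},-X_n)$ for $X_n < 0$, and checks that $\mathfrak{U}$ is a viscosity solution of a reflected (still uniformly elliptic, same constants) operator $\mathcal{G}$ on the \emph{full} ball $B_1$. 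Since $\sigma \le \mathfrak{U} \le 2\sigma$ on $B_1^-$, the interior Harnack inequality on $B_{3/4}$ yields $\inf_{B_{3/4}} \mathfrak{U} \ge c_0^{-1}\sigma$ in one stroke. This delivers precisely the uniform lower bound near $\{X_n=0\}$ that your chain argument has to work for: balls contained in $B^+_1$ shrink as $X_n\to 0$, so the chain length is unbounded, and the ``one-dimensional or radial barrier on $X_n$'' you invoke to seed the chain is left implicit (it can be supplied, e.g.\ via the $C^{1,\alpha}$ boundary regularity of $w$ from \cite{Wint}, but that is extra work). The reflection converts the half-ball problem into a full-ball problem and bypasses the boundary layer entirely; your route is viable but longer.
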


\begin{proof}
First of all consider the following Dirichlet problem
\begin{equation} \label{D1}
\left\{
\begin{array}{rcccl}
	F(X, \nabla w, D^2 w) & = &  0 & \mbox{in} & B^{+}_{1}\\
 w & = & \sigma & \mbox{on} & B^{\prime}_1\\
w & = & 0 & \mbox{on} & \partial B_{1} \cap \{X_n >0\}.
\end{array}
\right.
\end{equation}
From $C^{1,\alpha}$ regularity estimate, \cite[Theorem 3.1]{Wint}  we have $w \in C^{1,\alpha}\left(\overline{B}^{+}_{\frac{3}{4}}\right)$,
and, by the Comparison Principle
\begin{equation} \label{D3}
	0 \le w \le \sigma \quad \textrm{in} \,\,\, B^{+}_{1}.
\end{equation}
From now on, it is appropriate we define the following reflection $\mathfrak{U} \colon B_1 \rightarrow \mathbb{R}$,
\begin{equation} \label{D2}
\mathfrak{U}(X) \defeq \left\{
\begin{array}{ccl}
	w(X) & \mbox{if} & X \in B^{+}_{1} \cup B^{\prime}_1\\
	2 \sigma - w(X_1, \ldots,X_{n-1} -X_n) & \mbox{if} & X \in  B_{1} \cap \{X_n <0\}.
\end{array}
\right.
\end{equation}
We observe that $\mathfrak{U}$ is a viscosity solution to
$$
	\mathcal{G}(X, \nabla \mathfrak{U}, D^2 \mathfrak{U})=0 \quad \textrm{in} \quad B_1,
$$
where
$$
\mathcal{G}(X, \overrightarrow{p}, M) \defeq \left\{
\begin{array}{rcl}
	F(X, \overrightarrow{p}, M) & \mbox{if} & X_n \geq 0\\
	-F(\widetilde{X}, \overrightarrow{\widetilde{p}}, \widetilde{M}) & \mbox{if} & X_n<0,
\end{array}
\right.
$$
with
$$
\begin{array}{rcl}
	\widetilde{X} & \defeq & (X_1, \ldots , X_{n-1}, -X_n),\\

    \widetilde{p} & \defeq & (-p_1, \ldots, -p_{n-1}, p_n),\\

    \widetilde{M} & \defeq & \left\{
\begin{array}{rcl}
	-M_{ij} & \mbox{if} & 1 \leq i, j \leq n-1 \,\, \mbox{or} \,\, i=j=n\\
	 M_{ij} & \mbox{otherwise.} &
\end{array}
\right.
\end{array}
$$
Thus, from \eqref{D3},
$$
\sigma \le \mathfrak{U} \le 2 \sigma \quad \mbox{in} \quad B_{1}^{-}
$$
Hence,
$$
0 \le \mathfrak{U} \le 2 \sigma \quad \mbox{in} \quad B_1.
$$
Moreover, from Harnack inequality we have that
$$
	\sup_{B_{3/4}} \mathfrak{U} \le c_0 \inf_{B_{3/4}} \mathfrak{U}.
$$
Particularly,
$$
w(X) \ge c^{-1}_{0} \sigma \quad in \quad B^{+}_{\frac{3}{4}}.
$$
Therefore, the proof follows through the previous inequality combined with the Comparison Principle.
\end{proof}

\begin{lemma}[{\bf Hopf's type boundary principle}]\label{lemma2.1}
Let  $u$ be a viscosity solution to
$$
\left\{
\begin{array}{rcl}
	F(X, \nabla u, D^2 u) = 0 & \mbox{in} & B_{r}(Z)\\
 u \ge 0 & \mbox{in} & B_{r}(Z).
\end{array}
\right.
$$
with $r \le 1$. Assume that for some $X_0 \in \partial B_r(Z)$,
$$
	u(X_0)=0 \quad \textrm{and} \quad \frac{\partial u}{\partial \nu}(X_0) \le \theta,
$$
where $\nu$ is the inward normal direction at $X_0$. Then there exists a universal constant
$C >0$ such that
$$
	u(Z) \le C\theta r.
$$
\end{lemma}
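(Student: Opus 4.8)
The plan is to compare $u$ from below with a suitable positive barrier built on a spherical shell inside $B_r(Z)$, exploiting the Hopf-type normal derivative bound at $X_0$ together with uniform ellipticity and the first-order term controlled by $b$. First I would, after a translation, assume $Z=0$ and work on the annular region $A \defeq B_r \setminus \overline{B}_{r/2}$. On $A$ I would construct an explicit radial subsolution of the Pucci minimal operator with the lower-order term, namely a function of the form $\psi(X) \defeq \mu\left(e^{-\alpha |X|^2} - e^{-\alpha r^2}\right)$ with $\alpha = \alpha(n,\lambda,\Lambda,b)$ chosen large enough that $\mathcal{P}^{-}_{\lambda,\Lambda}(D^2\psi) - b|\nabla\psi| \ge 0$ in $A$; by \textbf{(F1)} and \textbf{(F2)} this makes $\psi$ a viscosity subsolution of $F(X,\nabla\psi,D^2\psi)=0$ there. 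Note $\psi = 0$ on $\partial B_r$ and $\psi > 0$ on $\partial B_{r/2}$.

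Next I would calibrate the multiplicative constant $\mu$. On the inner sphere $\partial B_{r/2}$ one has, by the Harnack inequality applied to $u$ (which is nonnegative and $F$-harmonic in $B_r(Z)$, hence in $B_{3r/4}$), a bound $\inf_{\partial B_{r/2}} u \ge c_H^{-1}\, u(0)$ for a universal $c_H$; so I would pick $\mu$ so that $\psi \le c_H^{-1} u(0)$ on $\partial B_{r/2}$, i.e. $\mu \sim u(0)/\bigl(c_H(e^{-\alpha r^2/4}-e^{-\alpha r^2})\bigr)$. Since $r\le 1$ the denominator is bounded below by a universal positive constant times one, so $\mu \ge c\, u(0)$ for universal $c>0$. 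On $\partial B_r$ both $\psi$ and (at the touching point sense) $u$ vanish, and $u\ge 0 = \psi$ on all of $\partial B_r$; on $\partial B_{r/2}$ we have $u \ge c_H^{-1}u(0) \ge \psi$. The comparison principle for $F$ on the ring $A$ then yields $u \ge \psi$ throughout $A$.

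Finally I would read off the normal-derivative inequality. Since $u(X_0)=\psi(X_0)=0$ and $u\ge\psi$ near $X_0$ inside $A$, the inward normal derivatives satisfy $\dfrac{\partial u}{\partial\nu}(X_0) \ge \dfrac{\partial\psi}{\partial\nu}(X_0)$; a direct computation gives $\dfrac{\partial\psi}{\partial\nu}(X_0) = 2\alpha\mu r\, e^{-\alpha r^2} =: c_\ast \mu r$ with $c_\ast$ universal (again using $r\le 1$ to keep $e^{-\alpha r^2}$ bounded below). Combining with the hypothesis $\dfrac{\partial u}{\partial\nu}(X_0)\le\theta$ gives $c_\ast \mu r \le \theta$, hence $\mu \le \theta/(c_\ast r)$. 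Together with $\mu \ge c\, u(0)$ this produces $u(0) \le C\theta/r$; but I should be careful about the power of $r$ — to get the claimed $u(Z)\le C\theta r$ I would instead rescale at the start, setting $\tilde u(Y)\defeq u(Z+rY)/r$ on $B_1$, which satisfies $\tilde F(Y,\nabla\tilde u,D^2\tilde u)=0$ with the same ellipticity and with lower-order constant $rb\le b$, has $\tilde u(Y_0)=0$ and $\partial_\nu \tilde u(Y_0)\le\theta$, and then apply the $r=1$ version of the above to get $\tilde u(0)\le C\theta$, i.e. $u(Z)\le C\theta r$. The main obstacle is making the barrier construction and the constant bookkeeping genuinely uniform in $r$ — this is precisely why the rescaling to $B_1$ is done up front — and verifying that the explicit radial $\psi$ is a viscosity subsolution of $F(X,\cdot,\cdot)=0$ using only \textbf{(F1)}--\textbf{(F2)}, which is routine but must be stated carefully since $\psi$ is smooth so classical and viscosity notions coincide there.
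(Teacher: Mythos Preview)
Your proposal is correct and follows essentially the same route as the paper: rescale to the unit ball via $\tilde u(Y)=u(Z+rY)/r$, build the radial barrier $\omega(Y)=\mu\bigl(e^{-\delta|Y|^2}-e^{-\delta}\bigr)$ on the annulus $B_1\setminus B_{1/2}$, choose $\delta=\delta(n,\lambda,\Lambda,b)$ so that $\mathcal{P}^{-}_{\lambda,\Lambda}(D^2\omega)-b|\nabla\omega|\ge 0$, fix $\mu$ via Harnack so that $\omega\le \tilde u$ on $\partial B_{1/2}$, apply comparison, and read off the normal derivative at $Y_0$. Your mid-argument detour without rescaling indeed gives the wrong power of $r$ (and your claim that $e^{-\alpha r^2/4}-e^{-\alpha r^2}$ is bounded below uniformly in $r$ is false as $r\to 0$), but you correctly diagnose and repair this by rescaling first, which is exactly how the paper proceeds.
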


\begin{proof}
By using a scaling argument, we may assume $r=1$. Indeed, it is sufficient to consider the scaled function $v : B_1 \rightarrow \mathbb{R}$
$$
 v_r(Y) = \frac{u(Z+r Y)}{r}.
$$
As before, $v_r$ is a viscosity solution of
$$
	F_r(Y,\nabla v_r, D^2 v_r) =0 \quad \textrm{in} \quad  B_1,
$$
with
$$
   F_r(Y, \overrightarrow{p}, M) \defeq rF\left(Z+rY, \overrightarrow{p}, \frac{1}{r} M\right)
$$
Let $\mathfrak{A} \defeq B_{1} \setminus B_{\frac{1}{2}}$ be an annular region and define $\omega \colon \overline{\mathfrak{A}} \rightarrow \mathbb{R}$ by
$$
	\omega(Y) \defeq \mu \left(e^{-\delta |Y|^2} - e^{-\delta}\right)
$$
where the positive constants $\mu$ and $\delta$ will be chosen \textit{a posteriori}. One can computer the gradient and Hessian of $\omega$ in $\mathfrak{A}$ as follows
\begin{eqnarray*}
	\partial_{i} \omega(Y) &=& -2 \mu \delta Y_i e^{-\delta |Y|^2},\\
	\partial_{ij} \omega(Y) &=& 4 \mu \delta^2 Y_i Y_j e^{-\delta |Y|^2} -2 \mu \delta e^{-\delta |Y|^2} \delta_{ij},\\
    |\nabla \omega(Y)| &= & 2\mu \delta e^{-\delta |Y|^2}|Y|.
\end{eqnarray*}
In particular, for every $M \in \mathcal{A}_{\lambda,\Lambda}\defeq \left\{A \in \textrm{Sym}(n) \suchthat \lambda \|\xi\|^2 \le \sum\limits_{i,j=1}^{n}A_{ij} \xi_i \xi_j \le \Lambda \|\xi\|^{2}, \, \forall \, \xi \in \mathbb{R}^n\right\}$ we have
\begin{eqnarray*}
	\textrm{Tr} \left(M \cdot D^2 \omega\right) - b |\nabla \omega| &=& \sum_{i,j=1}^{n} m_{ij} \partial_{ij} \omega - b \cdot \sqrt{\sum_{i=1}^{n} (\partial_{i} \omega)^2}  \\
	&=& 4 \mu \delta^2 e^{-\delta |Y|^2} \textrm{Tr}(M \cdot Y \otimes Y) - 2 \delta \mu \textrm{Tr}(M) e^{-\delta |Y|^2} - 2 \mu \delta b |Y| e^{-\delta |Y|^2}\\
	&\ge& 4 \mu \delta^2 \lambda |Y|^2 e^{-\mu |Y|^2} - 2 \delta \mu n \Lambda e^{-\delta |Y|^2} - 2 \mu \delta b |Y| e^{-\delta |Y|^2}\\
	&=& 2 \mu \delta (2 \delta \lambda |Y|^2 - b|Y| - n  \Lambda) e^{-\delta |Y|^2} \\
	&\ge& 2 \mu \delta \left(\frac{\delta \lambda}{2} - b - n \Lambda\right) e^{-\delta |Y|^2} \quad \textrm{in} \quad \mathfrak{A},
\end{eqnarray*}
where $\xi \otimes \xi = (\xi_i \xi_j)_{i,j}$.  Choose and fix $\delta \ge \frac{2}{\lambda} (b + n \Lambda)$.
Then, it follows readily that
$$
	\mathcal{P}_{\lambda, \Lambda}^{-} (D^2 \omega) - b |\nabla \omega| \ge  0 \quad \textrm{in} \quad \mathfrak{A}.
$$
Therefore, since $r \le 1$, if $\delta\in \left[\frac{2}{\lambda}(\tilde{b}+ n \Lambda), + \infty\right)$, with $\tilde{b} = r b$, we have
$$
	F_r(Y, \nabla \omega(Y), D^ 2\omega(Y)) \ge 0 \quad \textrm{in} \quad \mathfrak{A}.
$$
Now by Harnack inequality
$$
	v_r(0) \le \sup_{B_{1/2}} v_r \le c_0 \inf_{B_{1/2}} v_r,
$$
Hence
$$
      v_r(Y) \ge c_o^{-1}v_r(0) \quad \mbox{in} \quad B_{\frac{1}{2}}.
$$
By choosing $\mu =  \frac{v_r(0)}{c_0\left(e^{-\frac{\delta}{4}} - e^{-\delta}\right)}$ we have
$$
	\omega \le v_r \quad \mbox{on} \quad \partial \mathfrak{A}
$$
and Comparison Principle gives that
$$
   \omega \le v_r  \quad \mbox{in} \quad \mathfrak{A}
$$
Thus, if we label $Y_0 \defeq  \frac{X_0-Z}{r}$ then
$$
	\mu\delta e^{- \delta} \le \frac{\partial \omega}{\partial \nu}(Y_0) \le \frac{\partial v_r}{\partial \nu}(Y_0) \leq \theta.
$$
Therefore,
$$
	v_r(0) \le \theta \delta^{-1} c_0 \left(e^{\frac{3\delta}{4}}-1\right),
$$
and by returning to the original sentence we can conclude that
$$
  u(Z) \le c \theta r.
$$
\end{proof}

\section*{Acknowledgements}
\addcontentsline{toc}{section}{Acknowledgements}

\hspace{0.56cm} The authors would like to thank Eduardo V. Teixeira for insightful comments and suggestions that benefited a lot the final outcome of this article. We would like to thank anonymous referee by the careful reading and suggestions throughout this paper.

This article is part of the second author's PhD thesis which would like to thank to Department of Mathematics at Universidade Federal do Cear\'{a}-UFC-Brazil for fostering a pleasant and productive scientific atmosphere during the period of his PhD program. This work has received financial support from CAPES-Brazil, CNPq-Brazil and FUNCAP-Cear\'{a}.

\end{document}